\newtheorem{theorem}{Theorem}[section] 
\newtheorem{proposition}[theorem]{Proposition}
\newtheorem{lemma}[theorem]{Lemma}
\newtheorem{corollary}[theorem]{Corollary} 
\theoremstyle{definition}
\newtheorem{definition}[theorem]{Definition}  
\theoremstyle{remark}
\newtheorem{remark}[theorem]{Remark}
\newtheorem{axiom}{Axiom}[section]
\definecolor{brightmaroon}{rgb}{0.76, 0.13, 0.28}
\def\msection{\@startsection{section} 
{1} 
{0pt} 
{-3ex plus -.1ex minus -0.9ex} 
{-.9ex plus -.2ex} 
{\bfseries} 
}
\def\msubsection{\@startsection{subsection} 
{2} 
{0pt} 
{-3ex plus -.1ex minus -0.2ex} 
{-.9ex plus -.2ex} 
{\normalfont\bfseries} 
}
\begin{document}

\author{Amartya Goswami}

\address{Department of Mathematics and Applied Mathematics,\\University of Limpopo, Sovenga 0727, South Africa}

\email{amartya.goswami@ul.ac.za}

\title{Salamander lemma for non-abelian group-like structures}

\begin{abstract}
It is well known that the classical diagram lemmas of homological algebra for abelian groups can be generalized to non-abelian group-like structures, such as groups, rings, algebras, loops, etc. In this paper we establish such a generalization of the ``salamander lemma'' due to G.~M.~Bergman, in a self-dual axiomatic context (developed originally by Z.~Janelidze), which applies to all usual non-abelian group-like structures and also covers axiomatic contexts such as semi-abelian categories in the sense of G.~Janelidze, L.~M\'arki and W.~Tholen and exact categories in the sense of M.~Grandis.  
\end{abstract}

\makeatletter
\@namedef{subjclassname@2020}{%
\textup{2020} Mathematics Subject Classification}
\makeatother

\keywords{diagram lemma; exact sequence; duality for groups; salamander lemma.}

\subjclass[2010]{18G50, 20J05, 08A30}

\maketitle 

\msection{Introduction}\label{sec-Introduction}

The salamander lemma is a diagram lemma on a double complex formulated for abelian categories in \cite{B12}, where it has been shown that the other diagram lemmas of homological algebra (specifically, the $3\times 3$ lemma, the four lemma, the snake lemma, Goursat theorem, and the lemma on the long exact sequence
of homology associated with a short exact sequence of complexes) can be recovered from the salamander lemma. In this paper we formulate and prove a non-abelian version of the salamander lemma, in a self-dual axiomatic context presented in \cite{GJ17}, which includes all semi-abelian categories \cite{JMT02} and Grandis exact categories \cite{G84,G92,G12}. These categories are two separate generalizations of abelian categories, which in turn include many important non-abelian categories of group-like structures. Hence the context in which we prove the non-abelian salamander lemma can be applied to groups, rings, loops, Lie algebras, modules and vector spaces in particular, projective spaces, graded abelian groups, and many others.  In this paper, the proof of the salamander lemma reduces to a proposition on exactness of a sequence of subquotients (see Proposition~\ref{ext1} below).  

Two corollaries, (namely Corollary 2.1 and Corollary 2.2) of the salamander lemma in \cite{B12} have been used to recover above mentioned diagram lemmas of homological algebra. We give a reformulation of these two corollaries in the present context, and as an example we  apply them to prove $3\times 3$ lemma.   

\msection{The context}

After almost seventy years since S.~Mac~Lane had the idea (see \cite{M50}) to revisit basic homomorphism theorems of groups in a self-dual axiomatic framework, it is in \cite{GJ17} where a convincing framework has been described in order to achieve this goal.

The case of abelian groups led to the notion of an \emph{abelian category} (refined by Buchsbaum in \cite{B55}), which completely addressed the problem in the abelian case. After the work  of Grothendieck \cite{G57}, this became the central context for homological algebra. Duality allows to get two dual results out of one. For non-abelian groups, some developments (see \cite{W66,W71}) have been made, but without major success in respect to duality.  Instead, a non-dual category-theoretic treatment of groups and group-like structures flourished, which culminated with introduction \emph{semi-abelian category}, introduced in \cite{JMT02}. The context of semi-abelian categories allows a unified treatment (see e.g.~\cite{BB04}) of all standard homomorphism theorems (i.e. isomorphism theorems and diagram lemmas of homological algebra).    

The paper \cite{GJ17} shows that the difficulties in expressing duality phenomenon for group-like structures can be overcome by using functorial duality in the place of categorical duality. This idea originated in the  work \cite{J14} (see also \cite{JW14,  JW16, JW17}) on the comparison between semi-abelian categories with those appearing in the work of Grandis in homological algebra \cite{G84,G92,G12,G13}.   
 
The ``self-dual theory'' of \cite{GJ17} is based on five self-dual axioms which we recall in the next section after introducing the necessary language of this set up. Among the consequences of these axioms, we mention only a few (Lemma \ref{A} - Lemma \ref{rml})  that required in proving the salamander lemma (in this  context) and for details about these and other consequences, we refer our readers to \cite{GJ17}.  
  
In this section we briefly recall the axiomatic context introduced in \cite{GJ17}. This context consists of abstract objects, called ``groups'', which in concrete cases could be groups, rings, modules, or some other group-like structures. The abstract objects form a category whose maps are called ``morphisms'' (in the case of a particular type of group-like structures these are the usual morphisms of those structures). For each group, there is a specified bounded lattice of ``subgroups'', whose partial order is written as ``$\subseteq$'' (again, in the general context these lattices are given abstractly, and in concrete contexts they are the usual substructure lattices). To each morphism $f\colon G\to H$ there is an associated Galois connection between subgroup lattices 
\newcommand\mapsfrom{\mathrel{\reflectbox{\ensuremath{\mapsto}}}} 
\begin{align*}
\mathtt{Sub}\;\!G&\to \mathtt{Sub}\,\!H,\\
S&\mapsto f S,\\
f^{-1} T&\mapsfrom T, 
\end{align*}
which for concrete group-like structures is the Galois connection between substructure lattices given by the  direct and inverse images of substructures along the morphism $f$ (In the general case, we use the same terminology and call $f S$ the direct image of $S$ under $f$ and $f^{-1} T,$ the inverse image of $T$ under $f$). This data is subject to axioms recalled below.  The axioms are invariant under duality, which extends the usual categorical duality and is summarised by the following table (it is in fact an instance of a ``functorial duality'' as explained in \cite{GJ17}):
$$\xymatrix@R=2pt{ \textrm{Expression} & \textrm{Dual Expression} \\ \textrm{$G$ is a group} & \textrm{$G$ is a group} \\ \textrm{$S\in\mathtt{Sub}\;\!G$} & \textrm{$S\in\mathtt{Sub}\;\!G$}\\ \textrm{$S\subseteq T$ in $\mathtt{Sub}\;\!G$} & \textrm{$T\subseteq S$ in $\mathtt{Sub}\;\!G$} \\ f\colon G\to H & f\colon H\to G \\ f g & g f \\ f S & f^{-1} S\\ f^{-1} T & f T. }$$ 
In this context, for a group $G$ by $1$ we denote the bottom element of $\mathtt{Sub}\,G$ (and we call it the \emph{smallest subgroup} of $G$), and by $G$ we denote its top element (calling it the \emph{largest subgroup} of $G$). The \emph{image} of a group morphism $f\colon G\to H$  is defined as $\mathtt{Im}\,\!f=f G$. The dual notion is that of a \emph{kernel} of a group morphism, $\mathtt{Ker}\,\!f=f^{-1} 1$. When $G= \mathtt{Ker}\,\! f,$ we call $f$  a \emph{zero morphism} and  denote it by $0.$ The \emph{identity morphism} $1_G\colon G\to G$ for a group $G,$ is the morphism such that $1_Gf=f$ and $g1_G=g$ for arbitrary morphisms $f\colon F\to G$ and $g:G\to H.$ 
An \emph{isomorphism} is a morphism $f\colon X\to Y$ such that $fg=1_Y$ and $gf=1_X$ for some morphism $g\colon Y\to X.$ A \emph{normal subgroup} of a group $G$ is its subgroup $S$ which is the kernel of some group morphism $f\colon G\to H$ and dually, a \emph{conormal subgroup} $S$ of a group $G$ is a subgroup of $G$ which appears as the image of some group morphism $f\colon F\rightarrow G.$ In standard examples, all subgroups are conormal. In the general theory, however, we do not want to require this since its dual would force all subgroups to be normal. The axioms of our ``self-dual theory'' are as follows: 
\begin{axiom}\label{ax1}	
Assigning to  each group morphism $f\colon G\to H$ the  Galois connection 
$$\xymatrix{ \mathtt{Sub}\;\!G\ar@<2pt>[r] & \mathtt{Sub}\,\!H\ar@<2pt>[l]}$$
given by direct and inverse image maps under $f,$  defines a functor from the category of groups to the category of posets and Galois connections. 
\end{axiom}
\begin{axiom}\label{ax2}
For any group morphism $f\colon G\to H$ and subgroups $A$ of $G$ and $B$ of $H$ we have $ff^{-1} B=B\wedge \mathtt{Im}\,\!f$ and $f^{-1}f A=A\vee \mathtt{Ker}\,\!f.$ 
\end{axiom}
\begin{axiom} \label{ax3}
Each conormal subgroup $S$ of a group $G$ admits an embedding $\iota_S\colon S/1 \to G$ such that $\mathtt{Im}\iota_S\subseteq S$ and for arbitrary group morphism $f\colon U\to G$ such that $\mathtt{Im}f\subseteq S,$ we have $f=\iota_Su$ for a unique homomorphism $u\colon U\to S/1$. Dually, each normal subgroup $S$ of a group $G$ admits a projection $\pi_S\colon G\to G/S$ such that $S\subseteq\mathtt{Ker}\pi_S$ and for an arbitrary group homomorphism $g\colon G\to V$ such that $S\subseteq\mathtt{Ker}g,$ we have $g=v\pi_S$ for a unique homomorphism $v\colon G/S\to V.$ 
\end{axiom}

In classical group theory, Axiom~\ref{ax3} tells us that $\iota_{S}$ is the embedding of the group $S$ into the group $G,$ and $\pi_{S}$ is the quotient map from $G$ to the quotient of $G$ by the normal subgroup generated by $S.$ 

Back in the general context, a subgroup $B$ of a group $G$ is said to be \emph{normal to} a subgroup $A$ of $G$ when (i) $B\subseteq A,$ (ii) $A$ is a conormal subgroup of $G,$ and (iii) $\iota_{A}^{-1} B$ is a normal subgroup of the domain of $\iota_{A}^{-1}$. When a subgroup $B$ is normal to a conormal subgroup $A,$ we denote the codomain of $\pi_{\iota_{A}^{-1}B}$ as $A/B.$ We also write $B\triangleleft A$ when this relation holds.

\begin{axiom}	\label{ax4}
Any group morphism $f\colon G\to H$ factorizes as $f=\iota_{\mathtt{Im}\,\!f}h\pi_{\mathtt{Ker}\,\!f}$ where $h$ is an isomorphism.
\end{axiom}
\begin{axiom}	\label{ax5}
The join of any two normal subgroups of a group is normal and the meet of any two conormal subgroups is conormal.
\end{axiom}
Recall from \cite{GJ17} that among the consequences of the axioms above are the following lemmas.

\begin{lemma}	\label{A}
The direct image map will always preserve joins of subgroups and the inverse image map will always preserve meets of subgroups. 
\end{lemma}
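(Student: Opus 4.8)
The plan is to extract the statement from pure order theory, using only Axiom~\ref{ax1}. By that axiom, for every morphism $f\colon G\to H$ the direct and inverse image maps form a Galois connection, which unwinds to the adjunction
$$fS\subseteq T \iff S\subseteq f^{-1}T.$$
In this connection the direct image $S\mapsto fS$ is the lower (left) adjoint and the inverse image $T\mapsto f^{-1}T$ is the upper (right) adjoint; both are in particular monotone. The whole lemma thus reduces to the classical fact that a lower adjoint preserves every join that exists while an upper adjoint preserves every meet that exists, and I would simply prove this fact in the present notation.

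For the join half, suppose a join $\bigvee_i S_i$ exists in $\mathtt{Sub}\,G$ (the binary case $S_1\vee S_2$ being the one the lemma literally needs). Monotonicity of the direct image gives $fS_i\subseteq f(\bigvee_i S_i)$ for each $i$, so $f(\bigvee_i S_i)$ is an upper bound of the family $\{fS_i\}$ and hence $\bigvee_i fS_i\subseteq f(\bigvee_i S_i)$. For the reverse inclusion I would put $T=\bigvee_i fS_i$; from $fS_i\subseteq T$ for all $i$ the adjunction yields $S_i\subseteq f^{-1}T$ for all $i$, whence $\bigvee_i S_i\subseteq f^{-1}T$ by the universal property of the join, and one more application of the adjunction gives $f(\bigvee_i S_i)\subseteq T=\bigvee_i fS_i$. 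The two inclusions together establish that the direct image preserves the join.

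Rather than rerun the dual computation for meets, I would invoke the self-duality of the context: replacing $f$ by $f^{-1}$ and reversing every order relation, exactly as prescribed by the duality table, carries the assertion ``the direct image preserves joins'' onto the assertion ``the inverse image preserves meets,'' so the second half is a formal consequence of the first. I do not anticipate any real obstacle here, since everything is forced by the adjunction; the only point demanding care is the bookkeeping of which image map is the lower adjoint and which is the upper one, so that the equivalence $fS\subseteq T \iff S\subseteq f^{-1}T$ is applied in the correct direction at each step.
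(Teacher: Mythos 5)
Your proof is correct: the paper states Lemma~\ref{A} without proof, citing \cite{GJ17}, where it is established by exactly the standard argument you give --- in the monotone Galois connection of Axiom~\ref{ax1} the direct image is the lower adjoint and the inverse image the upper adjoint, lower adjoints preserve existing joins, and the meet half follows by the self-duality of the axioms. Your handling of the adjunction $fS\subseteq T \iff S\subseteq f^{-1}T$ and of the duality table matches the intended derivation, so there is nothing to fix.
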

\begin{lemma}\label{A1}
Any embedding is a monomorphism, i.e.~if $mu=mu'$ then $u=u',$ for any embedding $m\colon M\to G$ and any pair of parallel homomorphisms $u$ and $u'$ with codomain $M.$
\end{lemma}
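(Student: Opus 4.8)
The plan is to read the monomorphism property off directly from the \emph{uniqueness} clause in the universal property of embeddings recorded in Axiom~\ref{ax3}. Recall that an embedding is a map $m=\iota_S\colon S/1\to G$ attached to a conormal subgroup $S$ of $G$, characterized by the property that every morphism $f\colon U\to G$ with $\mathtt{Im}\,f\subseteq S$ factors as $f=\iota_S u$ for a \emph{unique} $u\colon U\to S/1$. So, given parallel homomorphisms $u,u'\colon U\to S/1$ with $mu=mu'$, I would set $f:=mu=mu'\colon U\to G$ and aim to place $f$ within the scope of this universal property. Once I know $\mathtt{Im}\,f\subseteq S$, Axiom~\ref{ax3} yields a unique $w\colon U\to S/1$ with $f=mw$; since both $u$ and $u'$ witness exactly this factorization, uniqueness forces $u=w=u'$, which is the desired conclusion.

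Thus the only substantive step is the inclusion $\mathtt{Im}\,f\subseteq S$. First I would use the functoriality of the direct-image assignment (Axiom~\ref{ax1}): because the direct image of a composite is the composite of the direct images, $\mathtt{Im}\,f=\mathtt{Im}(mu)=m\,\mathtt{Im}\,u$. Next, $\mathtt{Im}\,u\subseteq S/1$ holds trivially, as $\mathtt{Im}\,u$ is a subgroup of the codomain $S/1$ of $u$, whose top element is $S/1$ itself. Since the direct-image map $m(-)$ is monotone — this is immediate from its join-preservation in Lemma~\ref{A} — applying it gives $m\,\mathtt{Im}\,u\subseteq m\,(S/1)=\mathtt{Im}\,m$. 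Finally, $\mathtt{Im}\,m\subseteq S$ is precisely the condition imposed on $\iota_S$ in Axiom~\ref{ax3}. Chaining these three inclusions yields $\mathtt{Im}\,f\subseteq S$, as needed.

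With $\mathtt{Im}\,f\subseteq S$ in hand, I would invoke Axiom~\ref{ax3} for the morphism $f$ and conclude $u=u'$ exactly as sketched above. The proof rests only on Axioms~\ref{ax1} and \ref{ax3} together with Lemma~\ref{A}; no appeal to Axioms~\ref{ax2}, \ref{ax4}, or \ref{ax5} appears necessary. I expect the verification of $\mathtt{Im}\,f\subseteq S$ to be the one point requiring care, and even that is a short computation whose whole content is the interplay of functoriality and monotonicity of direct images; the monomorphism property itself is then a formal consequence of the uniqueness already built into the definition of an embedding.
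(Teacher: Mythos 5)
Your proof is correct, and there is nothing in the paper to compare it against: Lemma~\ref{A1} is stated without proof, being recalled from \cite{GJ17} as a consequence of the axioms. Your argument is the natural one and it is complete. Writing $m=\iota_S$ for the conormal subgroup $S$, you correctly reduce everything to the uniqueness clause of Axiom~\ref{ax3}, and the one genuinely substantive step --- verifying that $f=mu=mu'$ satisfies $\mathtt{Im}\,f\subseteq S$ so that the universal property applies at all --- is handled properly: functoriality (Axiom~\ref{ax1}) gives $\mathtt{Im}(mu)=m\,\mathtt{Im}\,u$, then $\mathtt{Im}\,u$ is below the top element of $\mathtt{Sub}(S/1)$, and monotonicity of the direct-image map gives $m\,\mathtt{Im}\,u\subseteq\mathtt{Im}\,m\subseteq S$, the last inclusion being part of Axiom~\ref{ax3}. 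One small stylistic remark: you do not need Lemma~\ref{A} to get monotonicity, since Axiom~\ref{ax1} already presents the direct-image map as one half of a Galois connection between posets, and such maps are monotone by definition; your derivation of monotonicity from join-preservation is valid but slightly roundabout. Dually, the same argument shows that projections are epimorphisms, which is the kind of symmetry this self-dual framework is designed to exhibit.
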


\begin{lemma}	\label{B}
The embedding of an image has trivial kernel and dually, the image of a projection is the largest subgroup of its codomain.
\end{lemma}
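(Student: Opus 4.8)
The plan is to prove the first assertion directly and to obtain the second by invoking the self-duality of the axioms, under which ``image'', ``embedding'' and ``having trivial kernel'' are interchanged with ``kernel'', ``projection'' and ``having largest image''; since every normal subgroup is a kernel, the dual statement is exactly that every projection has image the top element of its codomain. So let $S=\mathtt{Im}\,f$ for a morphism $f\colon G\to H$, and let $m=\iota_S\colon S/1\to H$ be the associated embedding; I must show $\mathtt{Ker}\,m=m^{-1}1=1$.

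First I would record two preliminary facts. By Lemma~\ref{A1}, $m$ is a monomorphism. Next I would check that $\mathtt{Im}\,m=S$: the inclusion $\mathtt{Im}\,m\subseteq S$ is part of Axiom~\ref{ax3}, while for the reverse inclusion I apply the universal property of $\iota_S$ to $f$ itself (which satisfies $\mathtt{Im}\,f=S\subseteq S$) to factor $f=m\bar f$ with $\bar f\colon G\to S/1$; then $S=\mathtt{Im}\,f=m(\mathtt{Im}\,\bar f)\subseteq m(S/1)=\mathtt{Im}\,m$ by functoriality and monotonicity of direct images (Lemma~\ref{A}).

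The decisive step is to feed $m$ into its own canonical factorization. Axiom~\ref{ax4} gives $m=\iota_{\mathtt{Im}\,m}\,h\,\pi_{\mathtt{Ker}\,m}$ with $h$ an isomorphism; since $\mathtt{Im}\,m=S$, the left factor is precisely $\iota_S=m$, so $m=m\,(h\,\pi_{\mathtt{Ker}\,m})$. As $m=m\,1_{S/1}$ as well, cancelling the monomorphism $m$ on the left yields $h\,\pi_{\mathtt{Ker}\,m}=1_{S/1}$. Because $h$ is invertible this forces $\pi_{\mathtt{Ker}\,m}=h^{-1}$, so $\pi_{\mathtt{Ker}\,m}$ is itself an isomorphism. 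An isomorphism has trivial kernel---its inverse-image map is, by Axiom~\ref{ax1}, a lattice isomorphism and hence preserves the bottom element---so $\mathtt{Ker}\,\pi_{\mathtt{Ker}\,m}=1$. Finally, the dual part of Axiom~\ref{ax3} gives $\mathtt{Ker}\,m\subseteq\mathtt{Ker}\,\pi_{\mathtt{Ker}\,m}=1$, whence $\mathtt{Ker}\,m=1$.

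The main obstacle I anticipate is conceptual rather than computational: the obvious attempt to derive $\mathtt{Ker}\,m=1$ from Lemma~\ref{A1} together with Axiom~\ref{ax2} collapses into a tautology, since the identity $m^{-1}m\,1=1\vee\mathtt{Ker}\,m$ reduces, via $m\,1=1$, to $\mathtt{Ker}\,m=\mathtt{Ker}\,m$; thus being a monomorphism does not by itself visibly pin down the kernel in this lattice-theoretic setting. The trick that unlocks the argument is the self-referential use of Axiom~\ref{ax4}: applying the image--kernel factorization to the embedding $m$ and identifying its image part with $m$ converts the cancellation property of monomorphisms into the statement that the projection part is invertible, at which point triviality of the kernel is immediate.
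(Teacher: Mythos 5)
Your proof is correct, but there is nothing in this paper to compare it against: Lemma~\ref{B} is stated without proof, recalled from \cite{GJ17} as a known consequence of the axioms, so your argument supplies a derivation that the paper omits. On its own terms the argument is sound. The preliminary computation $\mathtt{Im}\,\iota_S=S$ is right (apply the universal property of Axiom~\ref{ax3} to $f$ itself and use functoriality of direct images from Axiom~\ref{ax1}), and the decisive move---feeding $m=\iota_S$ into its own canonical factorization $m=\iota_{\mathtt{Im}\,m}\,h\,\pi_{\mathtt{Ker}\,m}$ from Axiom~\ref{ax4}, cancelling the monomorphism $m$ via Lemma~\ref{A1}, and concluding that $\pi_{\mathtt{Ker}\,m}=h^{-1}$ is an isomorphism---is a genuine idea, correctly executed; as you observe, Axiom~\ref{ax2} alone only yields a tautology here. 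Three small points are worth making explicit. First, the identification $\iota_{\mathtt{Im}\,m}=m$ presumes that $\iota_{(-)}$ is a fixed assignment of embeddings to conormal subgroups, as the paper's notation indeed suggests; even if embeddings were only determined up to isomorphism by the universal property, the discrepancy would be an isomorphism absorbed into $h$, so the step is robust under either reading. Second, your final step tacitly uses that $\mathtt{Ker}\,m$ is a normal subgroup of $S/1$ (it is, being by definition the kernel of $m$), so that $\pi_{\mathtt{Ker}\,m}$ exists and Axiom~\ref{ax3} gives $\mathtt{Ker}\,m\subseteq\mathtt{Ker}\,\pi_{\mathtt{Ker}\,m}=1$; and the claim that isomorphisms have trivial kernel does follow from Axiom~\ref{ax1}, since functoriality makes the inverse-image maps of $f$ and its inverse mutually inverse monotone bijections, hence bottom-preserving. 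Third, the appeal to duality for the projection half is legitimate precisely because every ingredient you used (Axioms~\ref{ax1}--\ref{ax4} and Lemma~\ref{A1}) belongs to the self-dual system; the dualized argument runs through the dual of Lemma~\ref{A1}, namely that projections are epimorphisms, which holds by the same duality principle even though the paper states only the embedding half.
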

\begin{lemma}\label{B1}
 A morphism is both an embedding and a projection if and only if it is an isomorphism.
\end{lemma}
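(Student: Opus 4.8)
The plan is to reduce both directions to the canonical factorization of Axiom~\ref{ax4}, namely $f=\iota_{\mathtt{Im}f}\,h\,\pi_{\mathtt{Ker}f}$ with $h$ an isomorphism, and to read off the two halves of the biconditional from conditions on $\mathtt{Ker}f$ and $\mathtt{Im}f$. The guiding principle is that being an embedding should force the projection part of the factorization to be trivial, and being a projection should force the embedding part to be trivial.

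For the ``only if'' part, suppose $f\colon X\to Y$ is both an embedding and a projection. First I would record that an embedding $\iota_S$ has image exactly $S$: since $S$ is conormal we may write $S=\mathtt{Im}p$ for some $p$, and the universal property in Axiom~\ref{ax3} factors $p=\iota_S u$, whence $S=\mathtt{Im}p\subseteq\mathtt{Im}\iota_S\subseteq S$. Lemma~\ref{B} then gives $\mathtt{Ker}f=1$, and dually (projections being epimorphisms by the self-dual form of Lemma~\ref{A1}) $\mathtt{Im}f$ is the largest subgroup $Y$. Feeding these into Axiom~\ref{ax4} rewrites the factorization as $f=\iota_{Y}\,h\,\pi_{1}$. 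The crux is then the auxiliary fact that $\iota_Y$ and $\pi_1$ are themselves isomorphisms. For $\pi_1\colon G\to G/1$ I would argue: since $1$ is the bottom element, every $g$ satisfies $1\subseteq\mathtt{Ker}g$, so in particular $1_G=v\pi_1$ for a unique $v$ by the dual universal property of Axiom~\ref{ax3}; then $\pi_1 v\pi_1=\pi_1=1_{G/1}\pi_1$, and since $\pi_1$ is an epimorphism this forces $\pi_1 v=1_{G/1}$, making $\pi_1$ an isomorphism. The argument for $\iota_Y$ is exactly dual, using that $\iota_Y$ is a monomorphism (Lemma~\ref{A1}). Consequently $f=\iota_Y h\pi_1$ is a composite of three isomorphisms, hence an isomorphism.

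For the ``if'' part, let $f$ be an isomorphism with inverse $g$. Applying functoriality of the direct and inverse image maps (Axiom~\ref{ax1}) to $gf=1_X$, together with $\mathtt{Ker}1_X=1$, I would compute $\mathtt{Ker}f=f^{-1}1\subseteq f^{-1}(\mathtt{Ker}g)=\mathtt{Ker}(gf)=1$, so $\mathtt{Ker}f=1$; dually $fg=1_Y$ gives $\mathtt{Im}f=Y$. Factoring $f=\iota_Y\,u$ through the universal property of $\iota_Y$ and observing that $u=\iota_Y^{-1}f$ is an isomorphism (since $\iota_Y$ is, by the previous paragraph) exhibits $f$ as an embedding; dually $f$ is a projection.

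The step I expect to be the main obstacle is the auxiliary isomorphism claim for $\iota_Y$ and $\pi_1$, since it is exactly what keeps the biconditional from being merely formal. A related delicate point is phrasing the ``if'' direction so as to avoid circularity: one cannot simply assert ``isomorphisms are embeddings'', but must produce the factorization through $\iota_{\mathtt{Im}f}$ and verify that the induced comparison map is invertible. Everything else is routine bookkeeping with Axioms~\ref{ax1}, \ref{ax3}, \ref{ax4} and Lemmas~\ref{A1} and~\ref{B}.
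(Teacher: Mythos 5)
You should first note a mismatch with the premise of the comparison: the paper contains no proof of Lemma~\ref{B1} at all. It is listed among the consequences of the axioms recalled from \cite{GJ17} (``Recall from \cite{GJ17} that among the consequences of the axioms above are the following lemmas''), so there is no in-paper argument to measure your proposal against; it can only be judged on its own terms against the stated axioms. On those terms your argument is essentially correct, and the route is the natural one: reduce everything to the factorization $f=\iota_{\mathtt{Im}f}\,h\,\pi_{\mathtt{Ker}f}$ of Axiom~\ref{ax4}, use Lemma~\ref{B} to see that an embedding has trivial kernel and (dually) a projection has largest image, and then prove the auxiliary claim that $\pi_{1}$ and $\iota_{Y}$ ($Y$ the largest, conormal, subgroup) are isomorphisms. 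Your verification of that claim is sound: the universal property of Axiom~\ref{ax3} applied to the identity produces the candidate inverse, and the cancellation $\pi_1 v=1_{G/1}$ (resp.\ $u\iota_Y=1_{Y/1}$) uses that projections are epimorphisms, which is legitimately available as the dual of Lemma~\ref{A1} in this self-dual setting. One cosmetic slip: in the ``only if'' direction, $\mathtt{Im}f=Y$ for a projection comes from the dual half of Lemma~\ref{B}, not from epimorphy; epimorphy is needed only later, for the cancellation step. Your parenthetical slightly conflates the two.

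The one point you should make explicit rather than leave implicit is in the ``if'' direction, where you conclude that $f=\iota_Y u$ with $u$ an isomorphism ``exhibits $f$ as an embedding.'' This presumes that the class of embeddings is stable under precomposition with isomorphisms. The paper never defines ``embedding'' beyond Axiom~\ref{ax3}'s phrase ``admits an embedding $\iota_S$''; under the strictest reading (embedding $=$ a designated morphism $\iota_S$), a nontrivial automorphism of a group would fail to be an embedding and the lemma itself would be false in the standard model, so the intended reading must be that an embedding is any morphism enjoying the universal property of some $\iota_S$. Under that reading your step closes easily --- if $m$ has the universal property for $S$ and $w$ is an isomorphism, then $\mathtt{Im}(mw)=\mathtt{Im}\,m\subseteq S$ and any $g$ with $\mathtt{Im}\,g\subseteq S$ factors uniquely as $g=(mw)(w\inv u)$ --- but since this is precisely the hinge that keeps the ``if'' direction from being circular (a worry you yourself flag), the two-line verification belongs in the proof. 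With that sentence added, your argument is a complete and correct derivation of Lemma~\ref{B1} from Axioms~\ref{ax1}--\ref{ax5} and Lemmas~\ref{A1} and~\ref{B}.
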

\begin{lemma}	\label{C}
Whenever $A\vee B\subseteq S,$ where $S$ is conormal in some group $G$, we have: $\iota_{S}^{-1}(A\vee B)=\iota_{S}^{-1} A\vee \iota^{-1}_{S} B.$
\end{lemma}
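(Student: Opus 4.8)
The plan is to avoid working with inverse images directly and instead to push the whole computation \emph{forward} along $\iota_{S}$. The general fact that $\iota_{S}^{-1}$ preserves meets (Lemma~\ref{A}) is of no help for a join, but the dual half of that same lemma --- that direct images preserve joins --- is exactly what is needed. The two facts that make the conormality hypothesis bite are that $\mathtt{Im}\,\iota_{S}=S$ and that $\iota_{S}$ has trivial kernel; once these are in hand, the desired identity drops out of two applications of Axiom~\ref{ax2}.

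First I would record the two preliminary facts about the embedding $\iota_{S}\colon S/1\to G$. Axiom~\ref{ax3} gives $\mathtt{Im}\,\iota_{S}\subseteq S$; for the reverse inclusion, write $S=\mathtt{Im}\,g$ for some $g\colon F\to G$ (possible since $S$ is conormal), and since $\mathtt{Im}\,g=S\subseteq S$ the universal property of $\iota_{S}$ factors $g=\iota_{S}u$. Then $S=\mathtt{Im}\,g=\iota_{S}(\mathtt{Im}\,u)\subseteq\mathtt{Im}\,\iota_{S}$, whence $\mathtt{Im}\,\iota_{S}=S$. Since $S$ is the image of a morphism, Lemma~\ref{B} gives $\mathtt{Ker}\,\iota_{S}=1$. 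Combining $\mathtt{Ker}\,\iota_{S}=1$ with the second identity of Axiom~\ref{ax2} yields $\iota_{S}^{-1}\iota_{S}X=X\vee\mathtt{Ker}\,\iota_{S}=X$ for every subgroup $X$ of $S/1$; that is, $\iota_{S}^{-1}\iota_{S}$ is the identity on subgroups of the domain.

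With these in place, the proof is a short chain of equalities. I would start from the right-hand side wrapped inside $\iota_{S}^{-1}\iota_{S}$ (which changes nothing, by the previous paragraph), then move $\iota_{S}$ across the join using Lemma~\ref{A}, and finally evaluate each $\iota_{S}\iota_{S}^{-1}$ by the first identity of Axiom~\ref{ax2}:
\begin{align*}
\iota_{S}^{-1}A\vee\iota_{S}^{-1}B
&=\iota_{S}^{-1}\iota_{S}\bigl(\iota_{S}^{-1}A\vee\iota_{S}^{-1}B\bigr)\\
&=\iota_{S}^{-1}\bigl(\iota_{S}\iota_{S}^{-1}A\vee\iota_{S}\iota_{S}^{-1}B\bigr)\\
&=\iota_{S}^{-1}\bigl((A\wedge S)\vee(B\wedge S)\bigr)\\
&=\iota_{S}^{-1}(A\vee B),
\end{align*}
where the last step uses the hypothesis $A\vee B\subseteq S$, which forces $A\subseteq S$ and $B\subseteq S$, and hence $A\wedge S=A$ and $B\wedge S=B$. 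This is precisely the claimed identity.

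The computation itself is routine; the only genuine obstacle is the bookkeeping around the embedding $\iota_{S}$ --- specifically, confirming that $\mathtt{Im}\,\iota_{S}$ is \emph{all} of $S$ (not merely contained in it, as Axiom~\ref{ax3} literally states) and that its kernel is trivial, since these are exactly the two inputs that let the two invocations of Axiom~\ref{ax2} collapse in the right direction. One should also keep track of the lattices: $A$, $B$, $A\vee B$ are read as subgroups of the codomain $G$, while $\iota_{S}^{-1}A$ and $\iota_{S}^{-1}B$ live in the domain $S/1$, so that each instance of Axiom~\ref{ax2} is applied on the correct side.
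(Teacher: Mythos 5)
Your proof is correct, and there is nothing in the paper to compare it against line by line: Lemma~\ref{C} is stated without proof, recalled as a known consequence of the axioms from \cite{GJ17}. Your argument is the natural (and essentially the standard) one for this setting: you first pin down the two facts that make conormality bite --- $\mathtt{Im}\,\iota_{S}=S$ (correctly upgraded from the inclusion $\mathtt{Im}\,\iota_{S}\subseteq S$ of Axiom~\ref{ax3} via the factorization of a morphism with image $S$) and $\mathtt{Ker}\,\iota_{S}=1$ (Lemma~\ref{B}) --- so that Axiom~\ref{ax2} collapses to $\iota_{S}^{-1}\iota_{S}X=X$ on subgroups of $S/1$ and $\iota_{S}\iota_{S}^{-1}A=A\wedge S$ on subgroups of $G$, after which join-preservation of direct images (Lemma~\ref{A}) finishes the computation. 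In effect you have shown that $\iota_{S}$ induces an isomorphism between $\mathtt{Sub}(S/1)$ and the interval below $S$ in $\mathtt{Sub}\,G$, whence $\iota_{S}^{-1}$ preserves joins of subgroups contained in $S$ even though inverse images in general only preserve meets; this is precisely the mechanism behind the lemma in \cite{GJ17}, and every step you take is licensed by the axioms and lemmas as stated in the paper.
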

\begin{lemma}\label{B2}
 Normal subgroups are stable under direct images along projections and conormal subgroups are stable under inverse images along embeddings.  
\end{lemma}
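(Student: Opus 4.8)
The two assertions are exchanged by the functorial duality recalled above (normal $\leftrightarrow$ conormal, projection $\leftrightarrow$ embedding, direct image $\leftrightarrow$ inverse image, kernel $\leftrightarrow$ image, join $\leftrightarrow$ meet), and Axiom~\ref{ax5} is itself self-dual. Hence the plan is to prove only the first statement in full and then read off the second by dualizing every notion. So I would fix a projection $\pi_N\colon G\to G/N$ with $N$ normal in $G$, and a normal subgroup $S$ of $G$, and aim to exhibit the direct image $\pi_N S$ as the kernel of a morphism out of $G/N$; by definition this makes $\pi_N S$ normal.

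First I would record the auxiliary fact that $\mathtt{Ker}\,\pi_M=M$ for every normal $M$. One inclusion is Axiom~\ref{ax3}, which gives $M\subseteq\mathtt{Ker}\,\pi_M$. For the reverse, writing $M=\mathtt{Ker}\,g$ for some $g$ and factoring $g=w\pi_M$ through the projection (the universal property of Axiom~\ref{ax3} applies since $M\subseteq\mathtt{Ker}\,g$), contravariant functoriality of the inverse image map (Axiom~\ref{ax1}) gives $M=\mathtt{Ker}\,g=\pi_M^{-1}(\mathtt{Ker}\,w)\supseteq\pi_M^{-1}1=\mathtt{Ker}\,\pi_M$. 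I would also extract two consequences of the fact that $\mathtt{Im}\,\pi_N$ is the top element of $\mathtt{Sub}\,(G/N)$ (Lemma~\ref{B}): via Axiom~\ref{ax2} one gets $\pi_N\pi_N^{-1}B=B\wedge\mathtt{Im}\,\pi_N=B$ for every $B$, and in particular $\pi_N N\subseteq\pi_N\,\mathtt{Ker}\,\pi_N=\pi_N\pi_N^{-1}1=1$, so $\pi_N N=1$.

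The main construction is then as follows. By Axiom~\ref{ax5} the join $S\vee N$ is normal, so it admits a projection $\pi_{S\vee N}\colon G\to G/(S\vee N)$, and by the auxiliary fact $\mathtt{Ker}\,\pi_{S\vee N}=S\vee N$. Since $N\subseteq S\vee N\subseteq\mathtt{Ker}\,\pi_{S\vee N}$, the universal property of $\pi_N$ produces a unique $v\colon G/N\to G/(S\vee N)$ with $\pi_{S\vee N}=v\pi_N$. Contravariant functoriality of inverse images now gives $\pi_N^{-1}(\mathtt{Ker}\,v)=\pi_{S\vee N}^{-1}1=S\vee N$; applying the direct image $\pi_N$ and using $\pi_N\pi_N^{-1}=\mathrm{id}$ yields $\mathtt{Ker}\,v=\pi_N(S\vee N)$. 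Finally, Lemma~\ref{A} (direct image preserves joins) together with $\pi_N N=1$ gives $\pi_N(S\vee N)=\pi_N S\vee\pi_N N=\pi_N S$, whence $\pi_N S=\mathtt{Ker}\,v$ is normal in $G/N$, as required.

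The step I expect to require the most care is the kernel computation $\mathtt{Ker}\,v=\pi_N(S\vee N)$: the identity $\pi_N\pi_N^{-1}B=B$ is \emph{not} automatic from the Galois connection and relies crucially on $\mathtt{Im}\,\pi_N$ being the top element (Lemma~\ref{B}), since in general Axiom~\ref{ax2} only yields $\pi_N\pi_N^{-1}B=B\wedge\mathtt{Im}\,\pi_N$. Everything else is bookkeeping with the covariant/contravariant functoriality of Axiom~\ref{ax1} and the universal properties of Axiom~\ref{ax3}. The second assertion then follows verbatim under the duality, each notion being replaced by its dual throughout.
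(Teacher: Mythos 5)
Your argument is correct, but note that there is nothing in this paper to compare it against: Lemma~\ref{B2} is stated without proof, as one of the consequences of the axioms recalled from \cite{GJ17}. Judged on its own terms, your proof is sound and self-contained within the five axioms. Each step checks out: the auxiliary fact $\mathtt{Ker}\,\pi_M=M$ for normal $M$ is proved correctly by factoring a morphism $g$ with $\mathtt{Ker}\,g=M$ as $g=w\pi_M$ and using contravariant functoriality of inverse images (Axiom~\ref{ax1}); the identity $\pi_N\pi_N^{-1}B=B$ does indeed need $\mathtt{Im}\,\pi_N$ to be the top element (Lemma~\ref{B}) on top of Axiom~\ref{ax2}, and you rightly flag this as the non-formal ingredient; Axiom~\ref{ax5} makes $S\vee N$ normal so that $\pi_{S\vee N}$ exists and factors as $v\pi_N$ by the universal property in Axiom~\ref{ax3}; and the chain $\mathtt{Ker}\,v=\pi_N(S\vee N)=\pi_N S\vee\pi_N N=\pi_N S$ (Lemma~\ref{A} plus $\pi_N N=1$) exhibits $\pi_N S$ as a kernel, hence normal by definition. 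The appeal to functorial duality for the second assertion is exactly what this self-dual framework is designed to license, since every axiom and lemma you invoke is self-dual, so the conormal statement follows verbatim. One minor caveat: your proof treats projections as the canonical morphisms $\pi_N$ supplied by Axiom~\ref{ax3}, which is the intended reading here; if one wanted the class of projections closed under composition with isomorphisms, you would need the (easy) additional remark that direct images along isomorphisms preserve normality, since for an isomorphism $h$ functoriality gives $h\,\mathtt{Ker}\,g=\mathtt{Ker}(gh^{-1})$.
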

\begin{lemma}[Restricted Modular Law]\label{rml}
For any three subgroups $X,$ $Y,$ and $Z$ of a group $G,$ if either $Y$ is normal and $Z$ is conormal, or $Y$ is conormal and $X$ is normal, then we have:
$X\subseteq Z \; \Rightarrow \; X\vee(Y\wedge Z)= (X\vee Y)\wedge Z.$
\end{lemma}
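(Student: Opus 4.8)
The two hypotheses are interchanged by the functorial duality of the axioms: dualising the conclusion turns $X\subseteq Z$ into $Z\subseteq X$ and swaps $\vee$ with $\wedge$, while \emph{normal} and \emph{conormal} are interchanged, so that after relabelling $X\leftrightarrow Z$ one checks that the case ``$Y$ conormal and $X$ normal'' is precisely the dual of the case ``$Y$ normal and $Z$ conormal''. Hence the plan is to prove only the first case and obtain the second for free by duality. In the first case the inclusion $X\vee(Y\wedge Z)\subseteq (X\vee Y)\wedge Z$ holds in any lattice once $X\subseteq Z$, so all the content lies in the reverse inclusion; in fact I will establish the equality directly.

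First I would pass to the conormal subgroup $Z$ via its embedding $\iota=\iota_Z\colon Z/1\to G$. Since $Z$ is conormal one has $\mathtt{Im}\,\iota=Z$ and, by Lemma~\ref{B}, $\mathtt{Ker}\,\iota=1$; thus Axiom~\ref{ax2} gives $\iota\iota^{-1}W=W\wedge Z$ for every $W$, and $\iota^{-1}Z$ is the top of $\mathtt{Sub}(Z/1)$. Writing $R=(X\vee Y)\wedge Z$ and $L=X\vee(Y\wedge Z)$, both $R$ and $L$ are contained in $Z$, so $\iota\iota^{-1}R=R$ and $\iota\iota^{-1}L=L$; consequently it suffices to prove $\iota^{-1}R=\iota^{-1}L$ in $\mathtt{Sub}(Z/1)$. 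Using that $\iota^{-1}$ preserves meets (Lemma~\ref{A}) and that $\iota^{-1}Z$ is the top element, $\iota^{-1}R=\iota^{-1}(X\vee Y)$, whereas $L\subseteq Z$ lets me invoke Lemma~\ref{C} to get $\iota^{-1}L=\iota^{-1}X\vee(\iota^{-1}Y\wedge\iota^{-1}Z)=\iota^{-1}X\vee\iota^{-1}Y$.

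Thus everything reduces to the identity $\iota^{-1}(X\vee Y)=\iota^{-1}X\vee\iota^{-1}Y$, and this is the main obstacle, since inverse image does not preserve joins in general and $X\vee Y$ need not lie in $Z$, so Lemma~\ref{C} is unavailable here. This is exactly where I would use the normality of $Y$. Taking the projection $\pi=\pi_Y$ with $\mathtt{Ker}\,\pi=Y$, Axiom~\ref{ax2} gives $X\vee Y=\pi^{-1}\pi X$, and functoriality of inverse image (Axiom~\ref{ax1}) yields $\iota^{-1}(X\vee Y)=(\pi\iota)^{-1}\pi X$. Setting $\phi=\pi\iota$, one computes $\mathtt{Ker}\,\phi=\iota^{-1}\mathtt{Ker}\,\pi=\iota^{-1}Y$ and $\phi(\iota^{-1}X)=\pi(\iota\iota^{-1}X)=\pi X$ (because $\iota\iota^{-1}X=X\wedge Z=X$), so a last application of Axiom~\ref{ax2} to $\phi$ gives $(\pi\iota)^{-1}\pi X=\phi^{-1}\phi(\iota^{-1}X)=\iota^{-1}X\vee\mathtt{Ker}\,\phi=\iota^{-1}X\vee\iota^{-1}Y$. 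This is the needed identity, whence $\iota^{-1}R=\iota^{-1}L$ and therefore $R=L$. The second case follows by duality, completing the argument. I expect the duality bookkeeping, together with the verification that $\mathtt{Im}\,\iota_Z=Z$ (deduced from conormality of $Z$ and the universal property in Axiom~\ref{ax3}), to be the only other points requiring care.
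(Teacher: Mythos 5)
Your proof is correct, but there is nothing in the paper to compare it against: the paper does not prove Lemma~\ref{rml} at all, recalling it from \cite{GJ17} as one of several consequences of the axioms, so your proposal in effect supplies the missing derivation from the five axioms and Lemmas~\ref{A}--\ref{B2}. Every step checks out. The duality reduction is legitimate, since the axiom system is invariant under the functorial duality summarised in the paper's table (normal/conormal, direct/inverse image, $\vee$/$\wedge$ and the direction of $\subseteq$ are interchanged), and relabelling $X\leftrightarrow Z$ does turn the dual of the case ``$Y$ normal, $Z$ conormal'' into the case ``$Y$ conormal, $X$ normal.'' The two auxiliary facts you use beyond the displayed lemmas, namely $\mathtt{Im}\,\iota_Z=Z$ for conormal $Z$ and its dual $\mathtt{Ker}\,\pi_Y=Y$ for normal $Y$, follow exactly as you indicate: write $Z=\mathtt{Im}\,f$, factor $f=\iota_Z u$ through the embedding via Axiom~\ref{ax3}, and use functoriality of direct images (Axiom~\ref{ax1}) to get $Z=\mathtt{Im}\,f\subseteq\mathtt{Im}\,\iota_Z\subseteq Z$; you only flag the image case explicitly, so make the dual kernel verification explicit as well, since $\mathtt{Ker}\,\pi=Y$ is what feeds Axiom~\ref{ax2} in your key step. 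The heart of your argument is well isolated: after passing along $\iota_Z$ (where $\mathtt{Ker}\,\iota_Z=1$ by Lemma~\ref{B} and $\iota_Z\iota_Z^{-1}W=W\wedge Z$ by Axiom~\ref{ax2}, so that it suffices to compare inverse images of the two sides, both of which lie below $Z$), everything reduces to $\iota_Z^{-1}(X\vee Y)=\iota_Z^{-1}X\vee\iota_Z^{-1}Y$, which is genuinely not an instance of Lemma~\ref{C} since $X\vee Y\not\subseteq Z$ in general; your derivation of it via the composite $\phi=\pi_Y\iota_Z$, using $X\vee Y=\pi_Y^{-1}\pi_Y X$, $\iota_Z\iota_Z^{-1}X=X\wedge Z=X$ (the only place $X\subseteq Z$ is used, as it should be), $\mathtt{Ker}\,\phi=\iota_Z^{-1}Y$, and one more application of Axiom~\ref{ax2} to $\phi$, is sound. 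This style of computation, shuttling subgroups along $\iota$ and $\pi$ and repeatedly invoking Axiom~\ref{ax2}, Lemma~\ref{A} and Lemma~\ref{C}, is precisely the technique the paper itself deploys in Proposition~\ref{pro1} and Proposition~\ref{ext1}, so your argument fits the framework naturally.
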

\msection{Exact sequences of subquotients}
\begin{proposition}	\label{pro1}
Let $f\colon G\to H$ be a group morphism and let $Y\vartriangleleft X$ be subgroups of $G.$ Let $\smash{V\vartriangleleft U}$  be subgroups of $H$. If $f Y\subseteq V$ and $fX\subseteq U$ then there is a  morphism $f'\colon X/Y \to U/V$ such that for any subgroup $S$ of $X/Y,$ we have
\begin{equation}	\label{conn1}
f' S=  \pi_{\iota^{-1}_U V} \iota^{-1}_Uf\iota_X\pi^{-1}_{\iota^{-1}_X Y} S.
\end{equation}
\end{proposition}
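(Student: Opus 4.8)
The plan is to build $f'$ in two stages by means of the universal properties in Axiom~\ref{ax3}, and only afterwards to read off the direct-image formula \eqref{conn1}. To lighten notation I would write $p=\pi_{\iota^{-1}_X Y}\colon X/1\to X/Y$ and $q=\pi_{\iota^{-1}_U V}\colon U/1\to U/V$ for the two projections; these exist because $Y\vartriangleleft X$ and $V\vartriangleleft U$ make $\iota^{-1}_X Y$ normal in $X/1$ and $\iota^{-1}_U V$ normal in $U/1$. The first stage is to lift $f$ to the chosen conormal subgroups. Since $\mathtt{Im}\,\iota_X\subseteq X$ by Axiom~\ref{ax3} and the direct image maps are monotone and functorial by Axiom~\ref{ax1}, the composite $f\iota_X\colon X/1\to H$ satisfies $\mathtt{Im}(f\iota_X)=f\,\mathtt{Im}\,\iota_X\subseteq fX\subseteq U$. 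As $U$ is conormal, the universal property of the embedding $\iota_U$ then supplies a unique morphism $g\colon X/1\to U/1$ with $\iota_U g=f\iota_X$.

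The second stage is to factor $qg\colon X/1\to U/V$ through $p$, and this is where the real work lies: I must verify the kernel inclusion $\iota^{-1}_X Y\subseteq\mathtt{Ker}(qg)$. By functoriality of inverse images $\mathtt{Ker}(qg)=g^{-1}\mathtt{Ker}\,q$, so by the Galois adjunction $fS\subseteq T\Leftrightarrow S\subseteq f^{-1}T$ the inclusion is equivalent to $g\,\iota^{-1}_X Y\subseteq\mathtt{Ker}\,q$. Since $\iota^{-1}_U V\subseteq\mathtt{Ker}\,q$ by Axiom~\ref{ax3}, it suffices to prove $g\,\iota^{-1}_X Y\subseteq\iota^{-1}_U V$; applying the adjunction for $\iota_U$ once more reduces this to $\iota_U g\,\iota^{-1}_X Y\subseteq V$. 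Here everything collapses: $\iota_U g=f\iota_X$, and Axiom~\ref{ax2} gives $\iota_X\iota^{-1}_X Y=Y\wedge\mathtt{Im}\,\iota_X\subseteq Y$, whence $\iota_U g\,\iota^{-1}_X Y=f\iota_X\iota^{-1}_X Y\subseteq fY\subseteq V$ by hypothesis. With the inclusion established, the universal property of the projection $p$ (Axiom~\ref{ax3}) yields a unique $f'\colon X/Y\to U/V$ with $f'p=qg$.

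It then remains to check that this $f'$ obeys \eqref{conn1} on an arbitrary $S\in\mathtt{Sub}(X/Y)$. Because $p$ is a projection, its image is the top subgroup of $X/Y$ (Lemma~\ref{B}), so Axiom~\ref{ax2} gives $p\,p^{-1}S=S\wedge(X/Y)=S$; hence $f'S=f'(p\,p^{-1}S)=(f'p)(p^{-1}S)=(qg)(p^{-1}S)$. Finally, $\mathtt{Ker}\,\iota_U=1$ by Lemma~\ref{B}, so Axiom~\ref{ax2} gives $\iota^{-1}_U\iota_U=\mathrm{id}$, and applying $\iota^{-1}_U$ to $\iota_U g=f\iota_X$ identifies the direct-image map $g$ with $\iota^{-1}_U f\iota_X$. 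Substituting this into the previous line produces $f'S=q\,\iota^{-1}_U f\iota_X\,p^{-1}S$, which is precisely \eqref{conn1}. I expect the kernel inclusion of the middle paragraph to be the only genuinely delicate step; the rest is a disciplined bookkeeping of the two Galois adjunctions together with Axioms~\ref{ax1} and \ref{ax2}.
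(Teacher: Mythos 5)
Your proof is correct and follows essentially the same route as the paper's: lift $f$ to $g=f''\colon X/1\to U/1$ via the universal property of $\iota_U$, verify the kernel inclusion to factor through $\pi_{\iota^{-1}_X Y}$, and then deduce the subgroup formula from $\mathtt{Ker}\,\iota_U=1$ and $\mathtt{Im}\,\pi_{\iota^{-1}_X Y}$ being the top subgroup (Lemma~\ref{B} plus Axiom~\ref{ax2}). The only cosmetic difference is that you check $\iota^{-1}_X Y\subseteq\mathtt{Ker}(qg)$ by pushing forward through the Galois adjunctions ($\iota_U g\,\iota^{-1}_X Y\subseteq fY\subseteq V$), whereas the paper pulls back ($\iota^{-1}_X Y\subseteq\iota^{-1}_X f^{-1}V=f''^{-1}\iota^{-1}_U V$) --- two equivalent uses of the same adjunction.
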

\begin{remark}
The right hand side of the identity (\ref{conn1}) represents the result of ``chasing'' a subgroup $S$ of $X/Y$ along the zigzag of solid morphisms in the following diagram:
\begin{equation}	\label{ladder}
\vcenter{
\xymatrix@=2pc{G\ar[r]^{f} & H\\
X/1\ar[u]^{\iota_X}\ar@{..>}[r]_{f''}\ar[d]_{{\pi_{\iota^{-1}_X Y}}} & U/1\ar[u]_{\iota_U}\ar[d]^{\pi_{\iota^{-1}_U V}}\\
X/Y\ar@{..>}[r]_{f'} & U/V.
}}
\end{equation}
\end{remark}

\begin{proof}
Since $f X \subseteq U,$ by the universal property of $\iota_U,$ there exists a unique morphism $f''\colon X/1 \to U/1$ such that the top square of diagram (\ref{ladder}) commutes.
Since for any subgroup $S$ of $X/1,$ $\iota^{-1}_U\iota_U f'' S=f'' S\vee \mathtt{Ker}\,\!\iota_U=f'' S$ (where the triviality of $\mathtt{Ker}\,\!\iota_U$  follows from Lemma \ref{B}), we have $\iota^{-1}_Uf\iota_X S=f''S.$  From $Y \subseteq f^{-1} V$ we obtain $$\iota^{-1}_X Y \subseteq \iota^{-1}_{X} f^{-1}V= f''^{-1}\iota^{-1}_{U}V  \subseteq f''^{-1} \mathtt{Ker}\,\!\pi_{\iota^{-1}_U V} = \mathtt{Ker}\,\!\pi_{\iota^{-1}_U V} f'',$$ and by the universal property of $\pi_{\iota^{-1}_X Y},$ there exists a unique morphism $f'\colon X/Y \to U/V$ such that
the bottom square of (\ref{ladder}) commutes. Finally, for any subgroup $S$ of $X/Y,$  we have $$f' S=f'\pi_{\iota^{-1}_X Y}\pi^{-1}_{\iota^{-1}_X  Y} S=\pi_{\iota^{-1}_U V} f''\pi^{-1}_{\iota^{-1}_X Y }S=\pi_{\iota^{-1}_U V} \iota^{-1}_Uf\iota_X\pi^{-1}_{\iota^{-1}_X Y} S,$$ 
where the first equality follows from Lemma \ref{B} and Axiom \ref{ax2}. 
\end{proof}

By taking $f$ in Propostion \ref{pro1} to be an identity morphism, we obtain:

\begin{corollary}	\label{cor1}		
Let $G$ be a group and let $X,$ $Y,$ $U,$ $V$ be subgroups of $G$ such that $Y\vartriangleleft X$ and $V\vartriangleleft U.$ If $Y \subseteq V$ and $X \subseteq U$ then there exists a  morphism $f'\colon X/Y \to U/V$ such that for any subgroup $S$ of $X/Y,$ we have
$f' S=  \pi_{\iota^{-1}_U V} \iota^{-1}_U\iota_X\pi^{-1}_{\iota^{-1}_X Y} S.$
\end{corollary}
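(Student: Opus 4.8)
The plan is to obtain Corollary~\ref{cor1} directly as a specialization of Proposition~\ref{pro1}. Since the corollary concerns four subgroups $X,Y,U,V$ of a single group $G$ (with $Y\vartriangleleft X$ and $V\vartriangleleft U$), the natural move is to take $H=G$ and $f=1_G$ in Proposition~\ref{pro1}. First I would verify that the hypotheses of the proposition are met under this substitution: the proposition requires $fY\subseteq V$ and $fX\subseteq U$, and since $1_G$ is the identity morphism, direct image along it is the identity on subgroups (i.e.\ $1_G S = S$ for every subgroup $S$, by Axiom~\ref{ax1} applied to the identity, which forces the associated Galois connection to be the identity). Hence $fY=Y\subseteq V$ and $fX=X\subseteq U$ reduce exactly to the corollary's hypotheses $Y\subseteq V$ and $X\subseteq U$.

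With the hypotheses verified, Proposition~\ref{pro1} hands us a morphism $f'\colon X/Y\to U/V$ satisfying, for any subgroup $S$ of $X/Y$,
\begin{equation*}
f' S = \pi_{\iota^{-1}_U V}\,\iota^{-1}_U\, f\, \iota_X\, \pi^{-1}_{\iota^{-1}_X Y}\, S.
\end{equation*}
The only remaining task is to simplify the middle of this zigzag. Substituting $f=1_G$, I would use that inverse and direct image along the identity morphism act as the identity on subgroup lattices; concretely $\iota^{-1}_U\, 1_G\, \iota_X = \iota^{-1}_U\, \iota_X$, since precomposing the inverse-image map $\iota^{-1}_U$ with the identity-image map leaves it unchanged. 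This collapses the formula to
\begin{equation*}
f' S = \pi_{\iota^{-1}_U V}\,\iota^{-1}_U\,\iota_X\,\pi^{-1}_{\iota^{-1}_X Y}\, S,
\end{equation*}
which is precisely the asserted identity.

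Essentially no work is required beyond this, so I do not expect a genuine obstacle; the statement is a computational corollary obtained by setting $f=1_G$. The one point demanding a little care—and the closest thing to a subtlety—is justifying that the Galois connection associated to $1_G$ is the identity Galois connection on $\mathtt{Sub}\,G$. This follows from the functoriality asserted in Axiom~\ref{ax1}: a functor sends identity morphisms to identity arrows, and the identity arrow in the category of posets and Galois connections is the identity Galois connection, so both $1_G S = S$ and $1_G^{-1} S = S$ hold for every $S$. Once this is in hand, the remainder is pure substitution into Proposition~\ref{pro1}, and the corollary follows.
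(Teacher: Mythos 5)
Your proposal is correct and is exactly the paper's route: the paper derives this corollary from Proposition~\ref{pro1} simply by taking $f$ to be an identity morphism, which is what you do. Your extra care in noting that Axiom~\ref{ax1} (functoriality) forces the Galois connection of $1_G$ to be the identity connection, so that $1_G$ can be dropped from the zigzag formula, is a legitimate spelling-out of the step the paper leaves implicit.
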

\begin{definition}
A sequence $G \xrightarrow {f} H\xrightarrow{g} I$ of group morphisms is called \emph{exact at $H$} if $\mathtt{Im}\,\!f=\mathtt{Ker}\,\!g.$
\end{definition}
\begin{proposition}\label{ext1}
Let $G \xrightarrow {f} H\xrightarrow{g} I$  be  group morphisms. Let  $V\vartriangleleft U$ be subgroups of $G,$ let $X\vartriangleleft W$ be subgroups of $H,$ and let $Z\vartriangleleft Y$ be subgroups of $I.$ Suppose $f V\subseteq X,$ $fU \subseteq W,$ $ g X\subseteq Z$ and $ gW \subseteq  Y.$ Then, there is a sequence 
\begin{equation}	\label{seq}
U/V\to W/X\to Y/Z
\end{equation} of  morphisms 
which is exact at $W/X$ if and only if 
\begin{equation}\label{ext}
f U  \vee X = g^{-1} Z\wedge W.
\end{equation}
\end{proposition}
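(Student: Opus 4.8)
The plan is to build the sequence by applying Proposition~\ref{pro1} twice and then to reduce exactness at $W/X$ to the subgroup identity \eqref{ext} by transporting images and kernels along the projection $\pi_{\iota^{-1}_W X}\colon W/1\to W/X$ and the embedding $\iota_W\colon W/1\to H$ of the conormal subgroup $W$. Applying Proposition~\ref{pro1} to $f$ with the data $V\vartriangleleft U$, $X\vartriangleleft W$ gives a morphism $\varphi\colon U/V\to W/X$, and applying it to $g$ with the data $X\vartriangleleft W$, $Z\vartriangleleft Y$ gives $\psi\colon W/X\to Y/Z$; together they form \eqref{seq}, and exactness at $W/X$ means $\mathtt{Im}\varphi=\mathtt{Ker}\psi$ as subgroups of $W/X$. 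Since $\pi_{\iota^{-1}_W X}$ is a projection it is surjective (Lemma~\ref{B}), so by Axiom~\ref{ax2} its inverse image map satisfies $\pi_{\iota^{-1}_W X}\pi^{-1}_{\iota^{-1}_W X}=\mathrm{id}$ and is therefore injective; hence it suffices to compare $\pi^{-1}_{\iota^{-1}_W X}\mathtt{Im}\varphi$ and $\pi^{-1}_{\iota^{-1}_W X}\mathtt{Ker}\psi$ inside $W/1$.

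For the image, I would evaluate the instance of \eqref{conn1} for $\varphi$ at the largest subgroup of $U/V$; since the inverse image map preserves the top element and $\mathtt{Im}\iota_U=U$, this yields $\mathtt{Im}\varphi=\pi_{\iota^{-1}_W X}\,\iota^{-1}_W(fU)$. Pulling back along $\pi_{\iota^{-1}_W X}$ and using Axiom~\ref{ax2} together with $\mathtt{Ker}\pi_{\iota^{-1}_W X}=\iota^{-1}_W X$ gives $\pi^{-1}_{\iota^{-1}_W X}\mathtt{Im}\varphi=\iota^{-1}_W(fU)\vee\iota^{-1}_W X$. Because $fU\subseteq W$ and $X\subseteq W$ we have $fU\vee X\subseteq W$, so Lemma~\ref{C} applies and collapses this join to $\iota^{-1}_W(fU\vee X)$.

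The kernel cannot be read off \eqref{conn1} directly, so I would instead use the commuting ladder for $\psi$ analogous to \eqref{ladder}: its lower square gives $\psi\,\pi_{\iota^{-1}_W X}=\pi_{\iota^{-1}_Y Z}\,g''$ and its upper square gives $\iota_Y g''=g\iota_W$. Hence $\pi^{-1}_{\iota^{-1}_W X}\mathtt{Ker}\psi=(\psi\,\pi_{\iota^{-1}_W X})^{-1}1=g''^{-1}\bigl(\mathtt{Ker}\pi_{\iota^{-1}_Y Z}\bigr)=g''^{-1}\iota^{-1}_Y Z$, and contravariant functoriality of the inverse image map (Axiom~\ref{ax1}) rewrites $g''^{-1}\iota^{-1}_Y Z=(\iota_Y g'')^{-1}Z=(g\iota_W)^{-1}Z=\iota^{-1}_W(g^{-1}Z)$.

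Combining the two computations, exactness at $W/X$ is equivalent to $\iota^{-1}_W(fU\vee X)=\iota^{-1}_W(g^{-1}Z)$. Since $W$ is conormal, $\iota_W$ has trivial kernel (Lemma~\ref{B}), so $\iota^{-1}_W\iota_W=\mathrm{id}$ and the direct image map $\iota_W$ is injective on $\mathtt{Sub}(W/1)$; applying it and using Axiom~\ref{ax2} with $\mathtt{Im}\iota_W=W$ turns the previous equality into $(fU\vee X)\wedge W=(g^{-1}Z)\wedge W$, which is exactly \eqref{ext} once we note $fU\vee X\subseteq W$. I expect the main obstacle to be the bookkeeping in the two computations above: one must route the kernel through the commuting ladder rather than \eqref{conn1}, keep the two injectivity reductions (for $\pi_{\iota^{-1}_W X}$ and for $\iota_W$) correctly oriented, and---most delicately---secure the join identity $\iota^{-1}_W(fU)\vee\iota^{-1}_W X=\iota^{-1}_W(fU\vee X)$, which fails for a general inverse image and relies on the containment $fU\vee X\subseteq W$ so that Lemma~\ref{C} is available.
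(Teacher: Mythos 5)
Your proof is correct, and its overall skeleton matches the paper's: existence of the two morphisms via Proposition~\ref{pro1}, and a reduction of exactness at $W/X$ to the identity \eqref{ext} by transporting along $\pi_{\iota^{-1}_W X}$ and $\iota_W$, using Axiom~\ref{ax2}, Lemma~\ref{A}, Lemma~\ref{B}, and the crucial Lemma~\ref{C} collapse $\iota^{-1}_W(fU)\vee\iota^{-1}_W X=\iota^{-1}_W(fU\vee X)$ (which the paper invokes at exactly the same spot). Two local differences are worth noting. First, the kernel of $W/X\to Y/Z$: the paper computes it by chasing the smallest subgroup of $Y/Z$ backwards along the zigzag, citing Proposition~\ref{pro1} --- but \eqref{conn1} as stated only gives the \emph{direct} image formula, so the paper's kernel chase tacitly rests on the duality principle of the framework. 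You instead derive $\pi^{-1}_{\iota^{-1}_W X}\mathtt{Ker}\,\psi=\iota^{-1}_W g^{-1}Z$ directly from commutativity of the two squares of the ladder \eqref{ladder} and contravariant functoriality of inverse images (Axiom~\ref{ax1}), together with $\mathtt{Ker}\,\pi_{\iota^{-1}_Y Z}=\iota^{-1}_Y Z$; this is more self-contained and arguably tidier than the paper's appeal to Proposition~\ref{pro1}. (The identity $\mathtt{Ker}\,\pi_N=N$ for normal $N$, which you use and which Axiom~\ref{ax3} only gives as an inclusion, is a standard consequence of the axioms that the paper also uses implicitly, e.g.\ in its converse direction, so this is not a gap.) Second, organization: the paper proves the two implications separately --- forward by computing image and kernel as $\pi_{\iota^{-1}_W X}$-images of $\iota^{-1}_W(fU\vee X)$ and $\iota^{-1}_W(g^{-1}Z\wedge W)$, converse by applying $\iota_W\pi^{-1}_{\iota^{-1}_W X}$ to the assumed equality --- whereas you get the biconditional in one pass by observing that $\pi^{-1}_{\iota^{-1}_W X}$ is injective (since $\pi\pi^{-1}=\mathrm{id}$ by Lemma~\ref{B} and Axiom~\ref{ax2}) and that the direct image map of $\iota_W$ is injective (since $\iota^{-1}\iota=\mathrm{id}$ by triviality of $\mathtt{Ker}\,\iota_W$). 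This buys a single chain of equivalences at no extra cost; the underlying computations are the same Axiom~\ref{ax2} manipulations the paper performs twice.
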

\begin{proof}
Let us consider the following diagram:
\begin{equation*}\label{ladder2}
\vcenter{
\xymatrix@=2pc{G\ar[r]^{f} & H\ar[r]^{g} & I\\
U/1\ar[u]^{\iota_U}\ar@{..>}[r]\ar[d]_{{\pi_{\iota^{-1}_U V}}} & W/1\ar@{..>}[r]\ar[u]_{\iota_W}\ar[d]^{\pi_{\iota^{-1}_W X}}& Y/1\ar[u]_{\iota_{Y}}\ar[d]^{\pi_{\iota^{-1}_Y Z}}\\
U/V\ar@{..>}[r] & W/X\ar@{..>}[r]& Y/Z.
}}  
\end{equation*}
The existence of the group morphisms of the sequence (\ref{seq}) follows from Proposition \ref{pro1}. 

Let us assume that the identity (\ref{ext}) holds and we will prove the exactness at $W/X$ and for that it is sufficient to show the image of $U/V\to W/X$ is equal to the kernel of $W/X\to Y/Z.$ We observe that the image of $U/V\to W/X$ is the image of the largest subgroup of $U/V,$ which by Proposition  \ref{pro1} is same chasing the largest subgroup of $U/V$ along the zigzag of solid arrows in the diagram above up to $W/X.$ By doing the chasing, we obtain 
\begin{align*}
 \pi_{\iota^{-1}_W X} \iota^{-1}_Wf \iota_U \pi_{\iota^{-1}_UV}U/V &= \pi_{\iota^{-1}_W X} \iota^{-1}_Wf \iota_U U\\
 &=\pi_{\iota^{-1}_W X} \iota^{-1}_Wf U\\
 &= \pi_{\iota^{-1}_W X} \iota^{-1}_Wf U \vee \pi_{\iota^{-1}_W X} \iota^{-1}_W X \qquad\qquad   [\mathtt{Ker}\,\!\pi_{\iota^{-1}_W X} \supseteq \iota^{-1}_W X]\\
 &=\pi_{\iota^{-1}_W X} (\iota^{-1}_W f U \vee \iota^{-1}_W X) \qquad\qquad\qquad\qquad\;\; [\mathrm{Lemma}\;\ref{A}]\\
 &=\pi_{\iota^{-1}_W X} \iota^{-1}_W (f U \vee X) \qquad\qquad\qquad\qquad\;\quad\;\;\, [\mathrm{Lemma}\;\ref{C}] 
\end{align*}
Similarly, the kernel of $W/X\to Y/Z$ is the inverse image of the smallest subgroup of $Y/Z,$ which by Proposition \ref{pro1} is same as chasing the smallest subgroup of $Y/Z$ along the zigzag of the solid arrows in the diagram above up to $W/X.$ By doing so, we get 
\begin{align*}
\pi_{\iota^{-1}_WX}\iota^{-1}_Wg^{-1}\iota_Y\pi^{-1}_{\iota^{-1}_YZ} 1 &= \pi_{\iota^{-1}_WX}\iota^{-1}_Wg^{-1}\iota_YZ\\
&= \pi_{\iota^{-1}_WX}\iota^{-1}_Wg^{-1}Z\\
&=\pi_{\iota^{-1}_WX}\iota^{-1}_W(g^{-1}Z\wedge W),
\end{align*}
where the reason of the last equality is as follows:
$$ \iota^{-1}_Wg^{-1}Z=\iota^{-1}_Wg^{-1}Z \vee 1= \iota^{-1}_W\iota_W(\iota^{-1}_Wg^{-1}Z)=  \iota^{-1}_W(\iota_W\iota^{-1}_Wg^{-1}Z)=\iota^{-1}_W(g^{-1}Z\wedge W).$$
The identity (\ref{ext}), and the two outcomes of the above chasing give the desired exactness at $W/X.$ 

Conversely,  let us assume that  (\ref{seq}) be exact at $W/X$, i.e.  in particular, we have  $\pi_{\iota^{-1}_W X} \iota^{-1}_Wf U = \pi_{\iota^{-1}_WX} \iota^{-1}_W (g^{-1} Z\wedge W).$  Applying $\iota_W\pi^{-1}_{\iota^{-1}_W X}$ on the left and the right hand sides of the last identity, we obtain, respectively 
\begin{align*}
\iota_W\pi^{-1}_{\iota^{-1}_W X}\pi_{\iota^{-1}_W X} (\iota^{-1}_Wf U) &= \iota_W(\iota_W^{-1}f U \vee \iota^{-1}_W X)\qquad\qquad\;\quad [\textrm{ Axiom}\;\ref{ax2}]\\
&=\iota_W\iota^{-1}_W(f U\vee X)\qquad\qquad\quad\;\quad\;[\mathrm{ Lemma}\;\ref{C}]\\
&= (f U\vee X)\wedge W\qquad\qquad\qquad\quad\;\; [\textrm{Axiom}\;\ref{ax2}]\\
&= f U\vee X,\qquad\qquad\;\quad\; [\mathrm{hypothesis,}\; fU\subseteq W]\\
\noalign{\hbox{and}}
\iota_W\pi^{-1}_{\iota^{-1}_W X}\pi_{\iota^{-1}_W X} (\iota^{-1}_W(g^{-1}Z\wedge W)) &= \iota_W(\iota_W^{-1}(g^{-1} Z\wedge W) \vee \iota^{-1}_W X)\quad\;\; [\textrm{Axiom}\;\ref{ax2}]\\
&=\iota_W\iota^{-1}_W(g^{-1} Z\wedge W)\quad\qquad [X\subseteq W, gX\subseteq Z,\\&\qquad\qquad\qquad \qquad \qquad\qquad  \;\;\;\; \mathrm{and \; Lemma}\;\ref{C}]\\ \\  
&= (g^{-1} Z\wedge W)\wedge W\qquad\quad\qquad\quad\hfill [\textrm{ Axiom}\;\ref{ax2}]\\
&= g^{-1} Z\wedge W.
\end{align*}
\end{proof}

\msection{Double complexes and salamander lemma} 
\begin{definition}
A \emph{double complex} is a triple $(X, \delta_h, \delta_v),$ where for all integers $m$ and $n,$ $X=(X^{n,m})$ is a family of groups,  $\delta_h=(\delta_h^{n,m}\colon X^{n,m}\to X^{n,m+1}),$ and $\delta_v=(\delta_v^{n,m}\colon X^{n,m}\to X^{n+1,m})$ are families of group morphisms such that
$\delta_h^{n,m}\delta_h^{n,m-1}=0,\;$ $\delta_v^{n,m}\delta_v^{n-1,m}=0,\;$and $\;\delta_v^{n,m+1}\delta_h^{n,m}=\delta_h^{n+1,m}\delta_v^{n,m}.$
\end{definition}
Let us consider a double complex as shown in the diagram
\begin{equation}\label{sl}
\vcenter{
\xymatrix@C=20pt@R=16pt{ &&\ar@{.}[d]  &\ar@{.}[d]\\&& \bullet\ar[d]^{m} & \ar[d]\\
\ar@{.}[r] &\bullet\ar[r]^{a}\ar[dr]^{p} & C \ar[dr]^{r} \ar[r]\ar[d]^{c} & \bullet\ar[d]^{v}\ar[r] & \ar@{.}[r] &\\
\ar@{.}[r] &	\bullet \ar[r]_{d} & A \ar[r]^{e}\ar[d]_{f} \ar[dr]^{\!q} & B\ar[r]^{s} \ar[d]^{g}& \bullet \ar@{.}[r] &\\
\ar@{.}[r] & \ar[r]	&\bullet\ar[r]\ar[d] & D\ar[r]^{t}\ar[d]^{u} & \bullet\ar@{.}[r] &\\
& &  \ar@{.}[d] & \bullet\ar@{.}[d] &\\
&&&&
}}
\end{equation}
where $p=ca,$ $r=ec,$ and $q=ge.$ 
\begin{definition} 	\label{ho1}
Following Bergman \cite{B12}, in a double complex (\ref{sl}) we define the following \emph{homology objects} associated with the group $A$:  
\begin{itemize}
\item [$\bullet$] $\displaystyle A_{\mathtt{h}} = \mathtt{Ker}\,\! e/\mathtt{Im}\,\! d,$ whenever $ \mathtt{Im}\,\! d \vartriangleleft \mathtt{Ker}\,\! e;$
\item [$\bullet$] $\displaystyle A_{\Box} = \mathtt{Ker}\,\! q/(\mathtt{Im}\,\! c \vee \mathtt{Im}\,\!d),$ whenever $(\mathtt{Im}\,\!c \vee \mathtt{Im}\,\!d) \vartriangleleft \mathtt{Ker}\,\!q;$
\item [$\bullet$] $\displaystyle ^\Box\!\!A = (\mathtt{Ker}\,\! e \wedge \mathtt{Ker}\,\!f)/\mathtt{Im}\,\! p,$ whenever $\mathtt{Im}\,\!p  \vartriangleleft (\mathtt{Ker}\,\!e \wedge \mathtt{Ker}\,\!f).$ 
\end{itemize}
	
When we say that one of the above three homology objects \emph{is defined}, we mean that the corresponding normality condition holds.
\end{definition} 
\begin{theorem}[Salamander Lemma]	\label{slt} In a double complex \textup{(}\ref{sl}\textup{)},
if the homology objects $C_{\Box},$ $A_{\mathtt{h}},$ $A_{\Box},$ $^\Box\! B,$ $B_{\mathtt{h}},$ and $^\Box\!D$ are defined, and $\mathtt{Im}\,\!c$ is a normal subgroup of $A,$ then there is an exact sequence 
\begin{equation}	\label{sallem}
C_{\Box}\to   A_{\mathtt{h}} \to  A_{\Box} \to ^\Box\!\!\!B\to B_{\mathtt{h}}\to ^\Box\!\!\!D.
\end{equation}
\end{theorem}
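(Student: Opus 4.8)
The plan is to realize the sequence (\ref{sallem}) as a concatenation of sequences of subquotients, each induced by one of the morphisms of the double complex (\ref{sl}), and then to establish exactness at each of the four interior objects by a single application of Proposition~\ref{ext1}. First I would read off from Definition~\ref{ho1} that $C_{\Box}=\mathtt{Ker}\,r/(\mathtt{Im}\,m\vee\mathtt{Im}\,a)$ is a subquotient of $C$, that $A_{\mathtt{h}}=\mathtt{Ker}\,e/\mathtt{Im}\,d$ and $A_{\Box}=\mathtt{Ker}\,q/(\mathtt{Im}\,c\vee\mathtt{Im}\,d)$ are subquotients of $A$, that ${}^\Box\!B=(\mathtt{Ker}\,s\wedge\mathtt{Ker}\,g)/\mathtt{Im}\,r$ and $B_{\mathtt{h}}=\mathtt{Ker}\,s/\mathtt{Im}\,e$ are subquotients of $B$, and that ${}^\Box\!D=(\mathtt{Ker}\,t\wedge\mathtt{Ker}\,u)/\mathtt{Im}\,q$ is a subquotient of $D$. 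The six arrows are then induced, in order, by $c,\,1_A,\,e,\,1_B,\,g$: the two arrows joining subquotients of the same group are produced by Corollary~\ref{cor1}, and the remaining three by Proposition~\ref{pro1}. To legitimize this I would check the containment hypotheses $fY\subseteq V$ and $fX\subseteq U$ of those statements for each arrow; these all reduce to the complex relations $ed=se=0$, $qc=0$, $cm=0$, to the commutativity consequences $\mathtt{Im}\,p\subseteq\mathtt{Im}\,d$, $\mathtt{Im}\,r=e\,\mathtt{Im}\,c$ and $g\,\mathtt{Ker}\,s\subseteq\mathtt{Ker}\,t\wedge\mathtt{Ker}\,u$, and to Axiom~\ref{ax2}. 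The three relations $V\vartriangleleft U$, $X\vartriangleleft W$, $Z\vartriangleleft Y$ required by Proposition~\ref{ext1} are precisely the ``defined'' clauses of the six homology objects, which are part of the hypotheses.

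The core of the argument is four applications of Proposition~\ref{ext1}, one at each interior object. In each case the proposition converts exactness into the single lattice identity $fU\vee X=g^{-1}Z\wedge W$, and I would simplify both sides using Axiom~\ref{ax2}, in the forms $ff^{-1}B=B\wedge\mathtt{Im}\,f$ and $f^{-1}fA=A\vee\mathtt{Ker}\,f$, together with the rewritings $\mathtt{Ker}\,r=c^{-1}\mathtt{Ker}\,e$, $\mathtt{Ker}\,q=e^{-1}\mathtt{Ker}\,g$, $\mathtt{Im}\,r=e\,\mathtt{Im}\,c$ and $\mathtt{Im}\,q=g\,\mathtt{Im}\,e$. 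Two of the four points are immediate. At $A_{\Box}$ the identity collapses to $\mathtt{Ker}\,e\vee\mathtt{Im}\,c=(\mathtt{Ker}\,e\vee\mathtt{Im}\,c)\wedge\mathtt{Ker}\,q$, which holds because $\mathtt{Ker}\,e\vee\mathtt{Im}\,c\subseteq\mathtt{Ker}\,q$ (indeed $\mathtt{Ker}\,e\subseteq\mathtt{Ker}\,q$ since $q=ge$, and $\mathtt{Im}\,c\subseteq\mathtt{Ker}\,q$ since $qc=0$). At ${}^\Box\!B$ it collapses to $(\mathtt{Ker}\,g\wedge\mathtt{Im}\,e)\vee\mathtt{Im}\,r=\mathtt{Im}\,e\wedge\mathtt{Ker}\,g$, which holds because $\mathtt{Im}\,r$ already lies in $\mathtt{Im}\,e\wedge\mathtt{Ker}\,g$. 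Neither of these needs anything beyond containments, the hypothesis on $\mathtt{Im}\,c$ playing no role here.

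The genuinely interesting points are exactness at $A_{\mathtt{h}}$ and at $B_{\mathtt{h}}$, and this is where I expect the main obstacle and where the remaining hypotheses are consumed. At $A_{\mathtt{h}}$ the identity becomes, after rewriting $c\,\mathtt{Ker}\,r=\mathtt{Im}\,c\wedge\mathtt{Ker}\,e$, the assertion $(\mathtt{Im}\,c\wedge\mathtt{Ker}\,e)\vee\mathtt{Im}\,d=(\mathtt{Im}\,c\vee\mathtt{Im}\,d)\wedge\mathtt{Ker}\,e$, which is exactly the conclusion of the Restricted Modular Law (Lemma~\ref{rml}) applied with $X=\mathtt{Im}\,d$, $Y=\mathtt{Im}\,c$, $Z=\mathtt{Ker}\,e$, the hypothesis $X\subseteq Z$ being $\mathtt{Im}\,d\subseteq\mathtt{Ker}\,e$ ($ed=0$). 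To invoke the lemma I need $Y=\mathtt{Im}\,c$ normal and $Z=\mathtt{Ker}\,e$ conormal: the former is precisely the standing hypothesis that $\mathtt{Im}\,c$ is normal in $A$, and the latter is supplied for free, since $A_{\mathtt{h}}$ being defined forces $\mathtt{Ker}\,e$ to be conormal by clause~(ii) in the definition of $\vartriangleleft$. Symmetrically, at $B_{\mathtt{h}}$ the identity is $(\mathtt{Ker}\,s\wedge\mathtt{Ker}\,g)\vee\mathtt{Im}\,e=(\mathtt{Im}\,e\vee\mathtt{Ker}\,g)\wedge\mathtt{Ker}\,s$, i.e.\ Lemma~\ref{rml} with $X=\mathtt{Im}\,e$, $Y=\mathtt{Ker}\,g$, $Z=\mathtt{Ker}\,s$ (and $\mathtt{Im}\,e\subseteq\mathtt{Ker}\,s$ from $se=0$); here $Y=\mathtt{Ker}\,g$ is normal automatically as a kernel and $Z=\mathtt{Ker}\,s$ is conormal because $B_{\mathtt{h}}$ is defined, so no extra hypothesis is needed on this side.

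I therefore expect the main difficulty to be not any single computation but the bookkeeping: carefully matching each homology object's ``defined'' clause to the conormality that the modular law demands, and isolating the one place, exactness at $A_{\mathtt{h}}$, where the normality of $\mathtt{Im}\,c$ is genuinely indispensable. Once the four lattice identities are assembled as above, Proposition~\ref{ext1} yields exactness at $A_{\mathtt{h}}$, $A_{\Box}$, ${}^\Box\!B$ and $B_{\mathtt{h}}$ simultaneously, which is the exactness of (\ref{sallem}).
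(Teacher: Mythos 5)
Your proposal is correct and follows essentially the same route as the paper: the morphisms are obtained from Proposition~\ref{pro1} (via $c$, $e$, $g$) and Corollary~\ref{cor1} (via identities), and exactness at each of the four interior objects is reduced to the lattice identity of Proposition~\ref{ext1}, simplified by Axiom~\ref{ax2} and, at $A_{\mathtt{h}}$ and $B_{\mathtt{h}}$, by the Restricted Modular Law (Lemma~\ref{rml}). If anything, you are slightly more explicit than the paper in checking the normality/conormality side conditions of Lemma~\ref{rml} (normality of $\mathtt{Im}\,c$, conormality of $\mathtt{Ker}\,e$ and $\mathtt{Ker}\,s$ coming from the ``defined'' clauses), which the paper leaves implicit.
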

\begin{proof} 
For proving the existence of all the  morphisms of the sequence (\ref{sallem}), we check the hypothesises of Proposition \ref{pro1} or Corollary \ref{cor1} whichever is applicable. 
\begin{itemize}
\item [$\bullet$] $C_{\Box}\to A_{\mathtt{h}}$:	
$
c\mathtt{Ker}\,\!r=c\mathtt{Ker}\,\!ec =cc^{-1}\mathtt{Ker}\,\!e=\mathtt{Ker}\,\!e\wedge \mathtt{Im}\,\!c\subseteq \mathtt{Ker}\,\!e,$ and using Lemma \ref{A}, we get 
$c(\mathtt{Im}\,\!a\vee \mathtt{Im}\,\!m)=c\mathtt{Im}\,\!a \vee c\mathtt{Im}\,\!m=\mathtt{Im}\,\!ca \vee 1 = \mathtt{Im}\,\!p \subseteq \mathtt{Im}\,\!d.$
\item  [$\bullet$] $A_{\mathtt{h}}\to A_{\Box}$: $\mathtt{Ker}\,\!e\subseteq \mathtt{Ker}\,\!q$ and $\mathtt{Im}\,\!d \subseteq \mathtt{Im}\,\!c\vee \mathtt{Im}\,\!d.$
\item  [$\bullet$] $ A_{\Box}\to ^\Box\!\!\!B$:
$
e\mathtt{Ker}\,\!q=e\mathtt{Ker}\,\!ge =ee^{-1}\mathtt{Ker}\,\!g=\mathtt{Im}\,\!e\wedge\mathtt{Ker}\,\!g \subseteq \mathtt{Ker}\,\!s \wedge \mathtt{Ker}\,\!g,$ and again using Lemma \ref{A}, we get  $
e(\mathtt{Im}\,\!c\vee \mathtt{Im}\,\!d)=e\mathtt{Im}\,\!c \vee e\mathtt{Im}\,\!d=\mathtt{Im}\,\!r \vee 1 = \mathtt{Im}\,\!r.$
\item [$\bullet$] $^\Box\!B\to B_{\mathtt{h}}$: $\mathtt{Ker}\,\!s \wedge \mathtt{Ker}\,\!g \subseteq \mathtt{Ker}\,\!s$ and $\mathtt{Im}\,\!r\subseteq \mathtt{Im}\,\!e.$
\item [$\bullet$] $ B_{\mathtt{h}} \to ^\Box\!\!\!D$:
$
g\mathtt{Ker}\,\!s \subseteq \mathtt{Ker}\,\!u,$ $g\mathtt{Ker}\,\!s \subseteq g\mathtt{Ker}\,\!tg=gg^{-1}\mathtt{Ker}\,\!t =\mathtt{Im}\,\!g \wedge \mathtt{Ker}\,\!t \subseteq \mathtt{Ker}\,\!t,$ which implies $g\mathtt{Ker}\,\!s\subseteq \mathtt{Ker}\,\!u\wedge \mathtt{Ker}\,\!t.$ Also $
g\mathtt{Im}\,\!e=\mathtt{Im}\,\!q.$
\end{itemize}
For the exactness,  we apply Proposition \ref{ext1}, by checking the condition (\ref{ext}).
\begin{itemize}
\item Exactness of $C_{\Box} \to  A_{\mathtt{h}}\to A_{\Box}$: 
For the left hand side of  (\ref{ext}), we have $c\mathtt{Ker}\,\!r  \vee \mathtt{Im}\,\!d = (\mathtt{Ker}\,\!e \wedge \mathtt{Im}\,\!c)\vee \mathtt{Im}\,\!d,$ whereas the right hand side of (\ref{ext})   is $(\mathtt{Im}\,\!c \vee \mathtt{Im}\,\!d)\wedge \mathtt{Ker}\,\!e= (\mathtt{Ker}\,\!e \wedge \mathtt{Im}\,\!c)\vee \mathtt{Im}\,\!d$ (by Lemma \ref{rml}). 	
\item  Exactness of $A_{\mathtt{h}} \to A_{\Box}\to ^\Box\!\!\!B$: 
We notice that the left hand side of (\ref{ext}) is 
$
\mathtt{Ker}\,\!e \vee (\mathtt{Im}\,\!c \vee \mathtt{Im}\,\!d) 
= \mathtt{Ker}\,\!e \vee \mathtt{Im}\,\!c,
$
whereas the right hand side of (\ref{ext}) is
$
e^{-1}\mathtt{Im}\,\!r \wedge \mathtt{Ker}\,\!q=e^{-1}e\mathtt{Im}\,\!c\wedge \mathtt{Ker}\,\!q= (\mathtt{Ker}\,\!e \vee \mathtt{Im}\,\!c)\wedge \mathtt{Ker}\,\!q =
\mathtt{Ker}\,\!e \vee \mathtt{Im}\,\!c.$
\item  Exactness of $A_{\Box}\to ^\Box\!\!\!B\to B_{\mathtt{h}}$: The left hand side of (\ref{ext}) is $e\mathtt{Ker}\,\!q\vee\mathtt{Im}\,\!r=ee^{-1}\mathtt{Ker}\,\!g\vee \mathtt{Im}\,\!r=(\mathtt{Im}\,\!e\wedge\mathtt{Ker}\,\!g)\vee\mathtt{Im}\,\!r=\mathtt{Im}\,\!e\wedge \mathtt{Ker}\,\!g,$ whereas the right hand side of (\ref{ext}) is $\mathtt{Im}\,\!e\wedge (\mathtt{Ker}\,\!s\wedge \mathtt{Ker}\,\!g)= \mathtt{Im}\,\!e\wedge \mathtt{Ker}\,\!g.$
\item  Exactness of $^\Box\!B\to B_{\mathtt{h}}\to ^\Box\!\!\!D$: The left hand side of (\ref{ext}) is $(\mathtt{Ker}\,\!s\wedge \mathtt{Ker}\,\!g)\vee \mathtt{Im}\,\!e,$ while the right hand side of (\ref{ext}) is $g^{-1}(\mathtt{Im}\,\!q)\wedge \mathtt{Ker}\,\!s = g^{-1}g(\mathtt{Im}\,\!e)\wedge \mathtt{Ker}\,\!s=(\mathtt{Im}\,\!e\vee \mathtt{Ker}\,\!g)\wedge \mathtt{Ker}\,\!s=(\mathtt{Ker}\,\!s\wedge \mathtt{Ker}\,\!g)\vee \mathtt{Im}\,\!e$ (by Lemma \ref{rml}).
\end{itemize}	
\end{proof}
\begin{remark} 
For the proofs of the existence of the morphisms $C_{\Box} \to A_{\mathtt{h}}$ and $B_{\mathtt{h}} \to ^\Box\!\!\!D$ in (\ref{sallem}), we have constructed direct morphisms which are respectively the same as the composites $ C_{\Box}\to ^\Box\!\!\!\!A\to A_{\mathtt{h}}$ and $ B_{\mathtt{h}}\to B_{\Box}\to ^\Box\!\!\!\!\,D$  as have been done in \cite{B12}.
\end{remark}
\begin{remark}
 The Theorem \ref{sl} is the horizontal version of the  salamander lemma. The formulation and proof of the vertical version are similar. 
\end{remark}

The following two corollaries are the reformulation of the Corollary 2.1 and the Corollary 2.2 of \cite{B12}, which are used to prove diagram lemmas of homological algebra. We give a proof of Corollary \ref{cor1} using Proposition \ref{ext1}, and the proof of Corollary \ref{cor2} is similar.
\begin{corollary}\label{cor1} 
Let $A\to B$ be a horizontal \textup{(}vertical\textup{)} morphism  of a double complex. Let  $A_{\mathtt{h}}$ and $B_\mathtt{h}$ \textup{(}$A_{\mathtt{v}}$ and $B_\mathtt{v}$\textup{)} are defined and  $A_{\mathtt{h}}=1,$ $B_\mathtt{h}=1$ \textup{(}$A_{\mathtt{v}}=1$ and $B_\mathtt{v}=1$\textup{)}.  Whenever the homology objects $A_{\Box}$ and $^{\Box}\!B$ are defined, we have the isomorphism:  $A_{\Box} \cong\,\! ^{\Box}\!B.$
\end{corollary}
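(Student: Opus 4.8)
The plan is to extract the isomorphism from the middle of the salamander sequence (\ref{sallem}), specialised to the present hypotheses. Taking the horizontal morphism $A\to B$ to be $e$, the relevant fragment is
$$A_{\mathtt{h}} \to A_{\Box} \xrightarrow{\;\phi\;} {}^{\Box}\!B \to B_{\mathtt{h}},$$
where $\phi$ is the morphism produced by Proposition \ref{pro1}; its existence is exactly the verification carried out for $A_{\Box}\to {}^{\Box}\!B$ in the proof of Theorem \ref{slt}. I would not invoke the full Theorem \ref{slt}, since that also requires $C_{\Box}$, ${}^{\Box}\!D$ to be defined and $\mathtt{Im}\,\!c$ to be normal. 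Instead I would apply Proposition \ref{ext1} twice, to the triples $A_{\mathtt{h}}\to A_{\Box}\to {}^{\Box}\!B$ and $A_{\Box}\to {}^{\Box}\!B\to B_{\mathtt{h}}$. The defining identity (\ref{ext}) for these two triples is precisely the content of the bullet points ``Exactness of $A_{\mathtt{h}}\to A_{\Box}\to {}^{\Box}\!B$'' and ``Exactness of $A_{\Box}\to {}^{\Box}\!B\to B_{\mathtt{h}}$'' already established in the proof of Theorem \ref{slt}; crucially, neither of these two uses Lemma \ref{rml} or the normality of $\mathtt{Im}\,\!c$, so both remain available under the weaker hypotheses of the corollary.

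Next I would insert the hypotheses $A_{\mathtt{h}}=1$ and $B_{\mathtt{h}}=1$. Since $A_{\mathtt{h}}$ is trivial, the morphism $A_{\mathtt{h}}\to A_{\Box}$ is a zero morphism, so its image is the smallest subgroup of $A_{\Box}$ (the direct-image map carries the bottom element to the bottom element); exactness at $A_{\Box}$ then forces $\mathtt{Ker}\,\!\phi=1$. Dually, since $B_{\mathtt{h}}$ is trivial the morphism ${}^{\Box}\!B\to B_{\mathtt{h}}$ is a zero morphism whose kernel is the largest subgroup of ${}^{\Box}\!B$ (the inverse-image map carries the top element to the top element); exactness at ${}^{\Box}\!B$ then forces $\mathtt{Im}\,\!\phi={}^{\Box}\!B$. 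Thus $\phi$ has trivial kernel and full image.

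Finally I would conclude that $\phi$ is an isomorphism. By Axiom \ref{ax4} we may write $\phi=\iota_{\mathtt{Im}\,\!\phi}\,h\,\pi_{\mathtt{Ker}\,\!\phi}$ with $h$ an isomorphism. Here $\mathtt{Ker}\,\!\phi=1$ and $\mathtt{Im}\,\!\phi={}^{\Box}\!B$, so the two outer factors are the embedding of the largest subgroup and the projection by the smallest subgroup. Each of these is itself an isomorphism: applying the universal property of $\iota_{{}^{\Box}\!B}$ to $1_{{}^{\Box}\!B}$ produces a two-sided inverse once one uses that embeddings are monomorphisms (Lemma \ref{A1}), and $\pi_{1}$ is an isomorphism by the dual argument. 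Hence $\phi$ is a composite of isomorphisms, giving $A_{\Box}\cong {}^{\Box}\!B$; the vertical case runs identically through the vertical homology objects and the vertical salamander sequence. I expect the only genuine obstacle to be this last step: in a general category a morphism that is simultaneously ``mono'' and ``epi'' need not be invertible, so the argument must rely on the factorization supplied by Axiom \ref{ax4} (equivalently on Lemma \ref{B1}) rather than on any formal cancellation.
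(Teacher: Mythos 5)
Your proof is correct, and it reaches the isomorphism by a route that is structurally different from the paper's. The paper works on $\phi$ directly: using $A_{\mathtt{h}}=1$ (i.e.\ $\mathtt{Im}\,d=\mathtt{Ker}\,e$) and $B_{\mathtt{h}}=1$ (i.e.\ $\mathtt{Im}\,e=\mathtt{Ker}\,s$; the proof's ``$\mathtt{Ker}\,b$'' is a typo), it establishes the two subgroup identities $e\,\mathtt{Ker}\,q=\mathtt{Ker}\,s\wedge\mathtt{Ker}\,g$ and $e^{-1}\mathtt{Im}\,r=\mathtt{Im}\,c\vee\mathtt{Im}\,d$, reads these as saying that $\phi$ is a projection and an embedding, and concludes by Lemma~\ref{B1}. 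You instead embed $\phi$ in the four-term fragment $A_{\mathtt{h}}\to A_{\Box}\to{}^{\Box}\!B\to B_{\mathtt{h}}$, apply Proposition~\ref{ext1} twice --- correctly observing that the two relevant verifications of condition (\ref{ext}) in the proof of Theorem~\ref{slt} use neither Lemma~\ref{rml} nor the normality of $\mathtt{Im}\,c$, so none of the extra hypotheses of the salamander lemma is needed --- and then kill the end terms: triviality of $A_{\mathtt{h}}$ and $B_{\mathtt{h}}$ forces $\mathtt{Ker}\,\phi=1$ and $\mathtt{Im}\,\phi={}^{\Box}\!B$, and Axiom~\ref{ax4} together with the universal properties of $\iota$ and $\pi$ (via Lemma~\ref{A1} and its dual) upgrades this to an isomorphism. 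The two arguments rest on essentially the same lattice computations, since the identities the paper displays are exactly the $A_{\mathtt{h}}=1$, $B_{\mathtt{h}}=1$ specializations of the exactness conditions you invoke, and they converge at the closing step: your ``trivial kernel and full image imply isomorphism'' via Axiom~\ref{ax4} is the same content the paper accesses through Lemma~\ref{B1}. What your version buys is that it reuses computations already certified in Theorem~\ref{slt}, makes explicit which hypotheses of the salamander lemma may be dropped (the paper leaves this implicit), and is faithful to Bergman's original derivation of the corollary from the exact sequence --- arguably it is also closer to what the paper announces when it says the corollary is proved ``using Proposition~\ref{ext1}.'' The paper's version is shorter and exhibits the embedding/projection structure of $\phi$ directly. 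Your cautionary remark at the end is well placed: mono plus epi does not suffice in this generality, and your auxiliary claims (that $\iota$ of the largest subgroup and $\pi$ of the smallest subgroup are isomorphisms, that the image of a morphism out of a trivial group is $1$, and that the kernel of a morphism into a trivial group is everything, because direct and inverse image maps preserve bottom and top as adjoints) all hold under the stated axioms.
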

\begin{proof}
 Let $A\to B$ be the morphism $e$ of the double complex (\ref{slt}). Let $A_{\mathtt{h}}=1$ and $B_\mathtt{h}=1.$ The existence of the morphism $\phi\colon A_{\Box}\to\,\! ^{\Box}\!B$ has been proved in Theorem \ref{sl}. Now to show $\phi$ is an isomorphism, by Lemma \ref{B1}, it is sufficient to show that $\phi$ is both an embedding and a projection. From the double complex (\ref{slt}), we observe that $e\mathtt{Ker}q=e\mathtt{Ker}ge=ee^{-1}\mathtt{Ker}g=\mathtt{Im}e\wedge \mathtt{Ker}g=\mathtt{Ker}b\wedge \mathtt{Ker}g,$ where the last equality follows from the fact that $\mathtt{Im}e=\mathtt{Ker}b.$  
 This proves that $\phi$ is a projection. Again,  
 $e^{-1}\mathtt{Im}r=e^{-1}e\mathtt{Im}c=\mathtt{Ker}e\vee \mathtt{Im}c=\mathtt{Im}\vee \mathtt{Im}d\;(\mathrm{as}\; \mathtt{Im}d=\mathtt{Ker}e)$
 proves that $\phi$ is an embedding.
\end{proof}
\begin{corollary}\label{cor2}
 In each of  the following four portions of  double complexes, if  the dotted row or column
\textup{(}the row or column through $B$ perpendicular to
the arrow connecting it with $A)$
is exact at $B,$ and $A_{\mathtt{h}},$ $A_{\mathtt{v}},$ $^\Box \!\!A,$ $A_{\Box}$ are defined 
 \[       
 \xymatrix@C=.9pc@R=.9pc{
 &\bullet\ar@{-}[d] & \bullet\ar@{-}[d] & & &1\ar[d] & 1\ar@{.>}[d] & & & \bullet\ar@{-}[d] & \bullet\ar@{-}[d] & & &\bullet\ar@{-}[d] & \bullet\ar@{-}[d] &\\
 1\ar[r] &A\ar[r]\ar[d]&\bullet\ar[d]\ar@{-}[r]& & \bullet\ar[r]&A\ar[d]\ar[r] &B\ar@{.>}[d]\ar@{-}[r]& &\bullet\ar[r] &\bullet\ar[d] \ar@{.>}[r]&B\ar[d]\ar@{.>}[r]&1 &\bullet\ar[r] &\bullet\ar@{.>}[d]\ar[r]&\bullet\ar[d]\ar@{-}[r]&\\
 1\ar@{.>}[r]&B\ar@{-}[d]\ar@{.>}[r]&\bullet\ar@{-}[d]\ar@{-}[r]& &\bullet\ar[r]&\bullet\ar@{-}[d]\ar[r]&  \bullet\ar@{-}[d]\ar@{-}[r]& &\bullet\ar[r]& \bullet\ar@{-}[d]\ar[r]&A\ar@{-}[d]\ar[r]&1 & \bullet\ar[r]&B\ar@{.>}[d]\ar[r]&A\ar[d]\ar@{-}[r]& \\   
 & & & & & & & & & & & & & 1&1\\
 &(a) & & & & (b) & & & & (c) & & & & (d) 
 }  
 \]
 then  we have the following pairs of isomorphisms associated  with the above four diagrams respectively:
$(a)\,^\Box\!\!A\cong A_{\mathtt{h}}, \, A_{\mathtt{v}}\cong A_{\Box}; (b)\,^\Box\!\!A \cong A_{\mathtt{v}},\,A_{\mathtt{h}}\cong A_{\Box};$\\ $ (c)\;A_{\mathtt{h}}\cong A_{\Box},\, ^\Box\!\!A\cong A_{\mathtt{v}};\;(d)\;\; A_{\mathtt{v}}\cong A_{\Box},\, ^\Box\!\!A \cong A_{\mathtt{h}}.$ 
\end{corollary}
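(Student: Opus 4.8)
The plan is to treat every isomorphism asserted in (a)--(d) as one of the four intramural comparison maps attached to the single node $A$, namely maps among $^{\Box}\!A=(\mathtt{Ker}\,e\wedge\mathtt{Ker}\,f)/\mathtt{Im}\,p$, $A_{\mathtt{h}}=\mathtt{Ker}\,e/\mathtt{Im}\,d$, $A_{\mathtt{v}}=\mathtt{Ker}\,f/\mathtt{Im}\,c$ and $A_{\Box}=\mathtt{Ker}\,q/(\mathtt{Im}\,c\vee\mathtt{Im}\,d)$. Each such map exists by the identity-morphism case of Proposition~\ref{pro1}, since in every instance the numerator and the denominator of the source are contained in those of the target. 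To prove that a given one is an isomorphism I would argue exactly as in the proof of the preceding corollary: by Lemma~\ref{B1} it suffices to show the map is simultaneously an embedding and a projection, and by the chasing formula of Proposition~\ref{pro1} the projection condition for a subquotient map $X/Y\to U/V$ reduces to the join identity $X\vee V=U$, while the embedding condition reduces to the meet identity $X\wedge V=Y$ (computed with Lemma~\ref{A}, Lemma~\ref{C} and Axiom~\ref{ax2}).

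There are two inputs to these identities. First, each displayed diagram surrounds $A$ with zero objects in prescribed positions, which I read off as triviality statements: a copy of $1$ to the west or north of $A$ forces $\mathtt{Im}\,d=1$ or $\mathtt{Im}\,c=1$, whereas a copy of $1$ to the east or south forces $\mathtt{Ker}\,e=A$ or $\mathtt{Ker}\,f=A$. Secondly, the hypothesis that the row or column through $B$ is exact at $B$ must be transported to $A$ through the commutativity relation $\delta_v^{n,m+1}\delta_h^{n,m}=\delta_h^{n+1,m}\delta_v^{n,m}$ together with $p=ca$ and $q=ge$. For instance in (a), where $B$ is the south neighbour of $A$ and exactness gives $\mathtt{Ker}(\text{the horizontal arrow out of }B)=1$, the factorisation $q=(\text{that arrow})\,f$ yields the single key identity $\mathtt{Ker}\,q=\mathtt{Ker}\,f$; the dual placement in (c) gives instead $\mathtt{Im}\,p=\mathtt{Im}\,c$.

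Assembling these, the verification becomes short. In (a), $\mathtt{Im}\,d=1$ gives $\mathtt{Im}\,p=1$, and with $\mathtt{Ker}\,q=\mathtt{Ker}\,f$ the object $A_{\Box}$ collapses to $\mathtt{Ker}\,f/\mathtt{Im}\,c=A_{\mathtt{v}}$, settling $A_{\mathtt{v}}\cong A_{\Box}$; for $^{\Box}\!A\cong A_{\mathtt{h}}$ one uses $\mathtt{Ker}\,e\subseteq\mathtt{Ker}\,q=\mathtt{Ker}\,f$ to see that $^{\Box}\!A\to A_{\mathtt{h}}$ is both a projection and an embedding. Diagram (b) is the horizontal--vertical mirror image of (a) (interchange $e\leftrightarrow f$ and $c\leftrightarrow d$), and diagrams (c) and (d) are the formal duals of (a) and (b) under the functorial duality of the framework; hence it suffices to carry out (a) in full and obtain the remaining three by symmetry and duality. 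Throughout, the leftover lattice manipulations are discharged by the Restricted Modular Law (Lemma~\ref{rml}).

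I expect the main obstacle to be the second input: faithfully converting \emph{exactness at $B$} into the one correct subgroup equality at $A$, i.e.\ the commutativity bookkeeping that turns the vanishing homology of a neighbour into $\mathtt{Ker}\,q=\mathtt{Ker}\,f$ (or dually $\mathtt{Im}\,p=\mathtt{Im}\,c$), while being careful that the zero objects of each diagram are attached to the correct side of $A$. The secondary difficulty is purely structural: at each application of the Restricted Modular Law, and for each of the four subquotients to be legitimately formed, one must confirm the relevant normality and conormality side-conditions, which is exactly where the standing hypothesis that $A_{\mathtt{h}}$, $A_{\mathtt{v}}$, $^{\Box}\!A$ and $A_{\Box}$ are all defined is consumed. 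Once these are in place, each embedding/projection check is routine.
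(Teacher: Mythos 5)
Your proposal is correct and is essentially the paper's own (merely sketched) argument: the paper proves only the extramural corollary in detail, showing the comparison map is an isomorphism via Lemma~\ref{B1} by verifying the embedding and projection conditions through the chasing formula of Proposition~\ref{pro1}, and declares the proof of Corollary~\ref{cor2} ``similar'' --- which is precisely your reduction of each intramural map to the identities $X\vee V=U$ and $X\wedge V=Y$, with the hypotheses correctly transported (e.g.\ $\mathtt{Ker}\,q=\mathtt{Ker}\,f$ in (a), $\mathtt{Im}\,p=\mathtt{Im}\,c$ in (c)). Your additional organization of (b) by the horizontal--vertical interchange and of (c), (d) by the framework's functorial duality is sound and consistent with how the paper itself exploits self-duality.
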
 
\begin{theorem}[$3\times 3$ Lemma]
 In the commutative diagram below, if all columns, and all rows but the first, are exact, then the first row is also exact. 
 \[  
 \xymatrix@C=2pc@R=2pc{
 &1\ar[d]^{y_1}&1\ar[d]^{y_5}&1\ar[d]^{y_9}&\\
 1\ar[r]^{x_1}&A'\ar[r]^{x_2}\ar[d]^{y_2}\ar[dr]^{z_1}&B'\ar[r]^{x_3}\ar[d]^{y_6}\ar[dr]^{z_2}&C'\ar[r]^{x_4}\ar[d]^{\!y_{10}}\ar[dr]^{z_3}&1\\ 
 1\ar[r]^{x_5}&A\ar[r]^{x_6}\ar[d]^{y_3}\ar[dr]^{z_4}&B\ar[r]^{x_7}\ar[d]^{y_7}\ar[dr]^{z_5}&C\ar[r]^{x_8}\ar[d]^{y_{11}}&1\\ 
 1\ar[r]^{x_9}&A''\ar[r]^{x_{10}}\ar[d]^{y_4}\ar[dr]^{z_6}&B''\ar[r]^{x_{11}}\ar[d]^{y_8}&C''\ar[r]^{x_{12}}  \ar[d]^{y_{12}}&1\\  
 &1 &1 &1 
 } 
 \]
\end{theorem}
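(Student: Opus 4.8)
The plan is to rephrase the statement in terms of the homology objects of the ambient double complex and then to transport known vanishings into the first row, using the intramural isomorphisms of Corollary~\ref{cor2} and the extramural isomorphism $A_{\Box}\cong{}^{\Box}B$ of Corollary~\ref{cor1}. First I would dispose of the routine preliminaries. The first row is a complex: since column~$C$ is exact at $C'$ the map $y_{10}$ is a monomorphism, and $y_{10}x_3x_2=x_7y_6x_2=x_7x_6y_2=0$ by commutativity and $x_7x_6=0$, whence $x_3x_2=0$. Moreover all subquotients used below are defined, the normality conditions being immediate because in each case one of the two relevant terms is $1$ or the whole object (the remaining instances following from Axiom~\ref{ax5} and Lemma~\ref{B2}). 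With this in hand, exactness of the first row at $A'$, $B'$, $C'$ is equivalent to $A'_{\mathtt{h}}=\mathtt{Ker}\,x_2/1=1$, $B'_{\mathtt{h}}=\mathtt{Ker}\,x_3/\mathtt{Im}\,x_2=1$, and $C'_{\mathtt{h}}=C'/\mathtt{Im}\,x_3=1$. The hypotheses say that every vertical homology object along a column vanishes (in particular $A'_{\mathtt{v}}=B'_{\mathtt{v}}=C'_{\mathtt{v}}=A_{\mathtt{v}}=B_{\mathtt{v}}=C_{\mathtt{v}}=B''_{\mathtt{v}}=1$) and that every horizontal homology object in the second and third rows vanishes (in particular $A_{\mathtt{h}}=B_{\mathtt{h}}=C_{\mathtt{h}}=B''_{\mathtt{h}}=1$).

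For the monomorphism end $A'$, the corner $A'$ carries a copy of $1$ both above and to its left, so two configurations of Corollary~\ref{cor2} apply at it: reading the exact column through $B'$ gives ${}^{\Box}A'\cong A'_{\mathtt{v}}=1$, and reading the exact second row through $A$ gives ${}^{\Box}A'\cong A'_{\mathtt{h}}$; chaining these yields $A'_{\mathtt{h}}\cong{}^{\Box}A'=1$.

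For the middle node $B'$ I would propagate a vanishing donor. Corollary~\ref{cor2} applied at $A$ (a copy of $1$ to its left, the exact third row through $A''$ below it) gives $A_{\Box}\cong A_{\mathtt{v}}=1$; the horizontal extramural isomorphism along $A\xrightarrow{x_6}B$ gives ${}^{\Box}B\cong A_{\Box}=1$; the vertical extramural isomorphism along $B'\xrightarrow{y_6}B$ gives $B'_{\Box}\cong{}^{\Box}B=1$; and Corollary~\ref{cor2} at $B'$ (a copy of $1$ above it, the exact column through $C'$) gives $B'_{\mathtt{h}}\cong B'_{\Box}=1$. The epimorphism end $C'$ is reached by a longer zigzag of the same two tools. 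Corollary~\ref{cor2} at $B''$ (a copy of $1$ below it, the exact column through $A''$) gives ${}^{\Box}B''\cong B''_{\mathtt{h}}=1$; the extramural isomorphisms along $B\xrightarrow{y_7}B''$, then $B\xrightarrow{x_7}C$, then $C'\xrightarrow{y_{10}}C$ successively give $B_{\Box}\cong{}^{\Box}B''=1$, then ${}^{\Box}C\cong B_{\Box}=1$, then $C'_{\Box}\cong{}^{\Box}C=1$; and since $C'$ is a corner with $1$ above and to its right one has $C'_{\mathtt{h}}=C'_{\Box}$ directly from the definitions, so $C'_{\mathtt{h}}=1$.

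The step I expect to be the main obstacle is the epimorphism end $C'$. Unlike $A'$, the node $C'$ is not placed so that a single configuration of Corollary~\ref{cor2} can collapse $C'_{\mathtt{h}}$ against a visibly trivial object: its two adjacent copies of $1$ sit on the outgoing sides, while the exact rows and columns one must exploit run through the interior of the diagram. Surjectivity of $x_3$ genuinely originates in the exactness of the third row (the vanishing of ${}^{\Box}B''$), and the real work is to identify the correct diagonal route --- down column~$B$ to $B''$, across the second row to $C$, up column~$C$ to $C'$ --- along which the extramural isomorphisms carry this vanishing to the donor $C'_{\Box}$. Checking that each extramural isomorphism is licensed (both flanking homology objects in the relevant direction being $1$) and that every object in the zigzag is defined is where the care lies; everything else is bookkeeping.
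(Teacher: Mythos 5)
Your proposal is correct and takes essentially the same approach as the paper: the complex condition for the first row via the embedding $y_{10}$ and Lemma~\ref{A1}, verification of the normality conditions via Axiom~\ref{ax5}, Lemma~\ref{B2} and duality, and then Bergman-style chains of intramural (Corollary~\ref{cor2}) and extramural (Corollary~\ref{cor1}) isomorphisms collapsing $A'_{\mathtt{h}}$, $B'_{\mathtt{h}}$ and $C'_{\mathtt{h}}$ to $1$. Your only deviations are cosmetic: you route the $A'$ chain through ${}^{\Box}A'$ instead of $A'_{\Box}$, and you terminate the $C'$ chain at ${}^{\Box}B''\cong B''_{\mathtt{h}}=1$ (identifying $C'_{\mathtt{h}}=C'_{\Box}$ directly from the definitions) where the paper continues through $A''_{\Box}$ to $A''_{\mathtt{v}}=1$ --- both chains use the same extramural links ${}^{\Box}C\cong B_{\Box}\cong{}^{\Box}B''$ and are equally valid.
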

\begin{proof} To show that the first row is a complex, we notice $ y_{10}x_3x_2=x_7x_6y_2=0,$ and since $y_{10}$ is an embedding, by Lemma \ref{A1} we have $x_3x_2=0.$ 

By the approach of \cite{B12}, to show the trivialities of $A'_{\mathtt{h}},$ $B'_{\mathtt{h}},$ and $C'_{\mathtt{h}},$  we need to consider the following homology objects: 
$$ A'_{\mathtt{h}},\, A'_{\Box},\, A'_{\mathtt{v}},\,  B'_{\mathtt{h}},\, B'_{\Box},\, ^\Box\!B,\, A_{\Box},\,  A_{\mathtt{v}},\, C'_{\mathtt{h}},\, C'_{\Box},\, ^\Box\!C,\, B_{\Box},\, ^\Box\!B'',\, A''_{\Box},\,  A''_{\mathtt{v}}.  $$ 
For them to be defined in self-dual context, we need to verify their respective normality conditions. We show the method of verification for $B'_{\Box},$ and the others can be checked similarly.   

In order to show  $(\mathtt{Im}x_2\vee \mathtt{Im}y_5) \vartriangleleft  \mathtt{Ker}z_2,$ first we observe that $\mathtt{Im}x_2\vee \mathtt{Im}y_5=\mathtt{Im}x_2\vee 1=\mathtt{Im}x_2 \subseteq \mathtt{Ker}x_3\subseteq \mathtt{Ker}z_2.$ To show that  $\mathtt{Ker}z_2$ is a conormal subgroup of $B'$, it is sufficient to show that its dual $\mathtt{Im}z_4$ is a normal subgroup of $B''.$ Now, $\mathtt{Im}z_4=y_7\mathtt{Im}x_6=y_7\mathtt{Ker}x_7$ which is a normal subgroup of $B''$ by Lemma \ref{B2}.  Finally to show that $\iota^{-1}_{\mathtt{Ker}z_2}\mathtt{Im}x_2$ is a normal subgroup of $\mathtt{Ker}z_2/1,$ it is sufficient to show that $\mathtt{Ker}x_{11},$ a dual of $\mathtt{Im}x_2,$    is a conormal subgroup of $B''$ which indeed is true because of the fact that  $\mathtt{Im}x_{10}=\mathtt{Ker}x_{11}.$ 

Applying the Corollary \ref{cor1} and the Corollary \ref{cor2}, the proofs of  trivialities of $A'_{\mathtt{h}},$ $B'_{\mathtt{h}},$ and $C'_{\mathtt{h}}$ are same as in \cite{B12}, and here we recall them.
\begin{align*}
A'_{\mathtt{h}}&\cong A'_{\Box} \cong A'_{\mathtt{v}}=1.\\
B'_{\mathtt{h}}&\cong B'_{\Box}\cong\, ^\Box\!B \cong A_{\Box} \cong A_{\mathtt{v}}=1.\\
C'_{\mathtt{h}}&\cong C'_{\Box}\cong\, ^\Box\!C \cong B_{\Box}\cong\, ^\Box\!B'' \cong A''_{\Box} \cong  A''_{\mathtt{v}}=1. 
\end{align*} 

\end{proof}

\section*{Acknowledgement}
I am grateful to  Zurab Janelidze for many fruitful discussions. 
\appendix

\end{document}